\theoremstyle{definition}
  \newtheorem{definition}{Definition}[section]
\theoremstyle{plain}
  \newtheorem{theorem}{Theorem}[section]
  \newtheorem{proposition}{Proposition}[section]
  \newtheorem{lemma}{Lemma}[section]
\newcommand{\re}{\mathbb{R}}
\title{Vanishing viscosity limit for Riemann solutions to a $2 \times 2$ hyperbolic system 
with linear damping}
\author{Richard De la cruz \thanks{Universidad Pedagógica y Tecnológica de Colombia, School of Mathematics and Statistics, 150003, Colombia. {\em e-mail:} \href{mailto:richard.delacruz@uptc.edu.co}{richard.delacruz@uptc.edu.co} } %{\em e-mail:} richard.delacruz@uptc.edu.co} 
\\ Juan Juajibioy \thanks{Universidad Pedagógica y Tecnológica de Colombia, School of Mathematics and Statistics, 150003, Colombia. {\em e-mail:} \href{mailto:juan.juajibioy@uptc.edu.co}{juan.juajibioy@uptc.edu.co}}
}
\date{September 15, 2020}
\begin{document}
\maketitle

\begin{abstract}
In this paper, we propose a time-dependent viscous system and by using the vanishing viscosity method we show the existence of %delta shock solution 
solutions for the Riemann problem
to a particular $2 \times 2$ system of conservation laws with linear damping.
\end{abstract}
{\bf Keywords:} Nonstrictly hyperbolic system, linear damping, Riemann problem, time-dependent viscous system, delta shock wave solution.
\section{Introduction}
In this paper, we study the existence of solutions to the Riemann problem for the following hyperbolic system of conservation laws with linear damping
\begin{equation}
\label{system_ld}
  \begin{cases}
   u_t+\frac{1}{k+1}(u^{k+1})_x=-\alpha u,\\
   v_t+\left(vu^k \right)_x=0,
  \end{cases}
 \end{equation}
where $\alpha>0$ is a constant,
and the sign of $v$ is assumed to be unchanging. Thus for convenience, we assume
$v \ge 0$ throughout  this  paper.
The initial data is given by
\begin{equation} \label{datoRiemann}
 (v(x,0),u(x,0))=\begin{cases}
                (v_-,u_-), &\text{if } x<0,\\
                (v_+,u_+), &\text{if } x>0,
               \end{cases}
\end{equation}
for arbitrary constant states $(v_\pm,u_\pm)$ with $v_\pm>0$.
It is well known that the system \eqref{system_ld} is not strictly hyperbolic with eigenvalue $\lambda = u^k$ and right eigenvector ${\bf r}=(1,0)$. Moreover, $\nabla \lambda \cdot {\bf r}=0$ and therefore the system is linearly degenerate.
When $k=1$, the homogeneous case of the system \eqref{system_ld} is used to model the evolution of density inhomogeneities in matter in the universe \cite[B. Late nonlinear stage, 3. Sticky dust]{SZ}. 
The system \eqref{system_ld} belongs to the class of triangular systems. 
The triangular systems of conservation laws arises in a wide variety of models in physics and engineering, see for example \cite{Isaacson, SS} and the references therein.
%One of the principal reasons why we consider $k$ an odd natural number is because the system \eqref{system_ld} turns out to be a generalization of the one-dimensional zero-pressure gas dynamics with linear damping. 
 For this reason, the triangular systems have been studied by many authors and several rigorous results have been obtained for this.\\

In 1993, Joseph \cite{Joseph} considered the Riemann problem for the homogeneous case of the system \eqref{system_ld} with $k=1$. He used a parabolic regularization system to obtained an explicit formulae of the Riemann solutions. So, he constructed the weak limit of the approximation solution and this is defined as a delta shock wave type solution.
Recently, De la cruz \cite{Delacruz} solved the Riemann problem to the system \eqref{system_ld} when $k=1$. His work include classical Riemann solution and delta shock wave solution.\\

In this paper, we are interested in finding solutions to the Riemann problem for the system \eqref{system_ld} with inital data \eqref{datoRiemann}.   Therefore, we %introduce the following viscous system
propose the following time-dependent viscous system
 \begin{equation}
\label{system_ld_eps}
  \begin{cases}
     u_t+\frac{1}{k+1}(u^{k+1})_x=\varepsilon \frac{1}{\alpha k} e^{-\alpha kt}(1-e^{-\alpha kt}) u_{xx}-\alpha u,\\
   v_t+\left(v u^k \right)_x=0,
  \end{cases}
 \end{equation}
(where $\alpha>0$ is a constant) with initial data \eqref{datoRiemann}. 
Observe that when $\alpha \to 0+$, we have that~$\lim\limits_{\alpha \to 0+} \frac{1}{\alpha k} e^{-\alpha kt}(1-e^{-\alpha kt})=t$.
%and the system \eqref{system_ld_eps} coincides with the first equation of the viscous system (1.9) in \cite{Ercole}.
The viscous system \eqref{system_ld_eps} is well motivated by scalar conservation law with time-dependent viscosity
\begin{equation*} \label{GBurgers}
u_t+ F(u)_x = G(t)u_{xx}.
\end{equation*}
where $G(t) > 0$ for $t > 0$.
When $F(u)=u^2$ the scalar equation is called the Burgers equation with time-dependent viscosity. The Burgers equation with time-dependent viscosity
was studied as a mathematical model of the propagation of the finite-amplitude sound waves in variable-area ducts, where $u$ is an acoustic variable, with the linear effects of changes in the duct area taken out, and the time-dependent viscosity $G(t)$ is  the  duct  area \cite{Crighton, DE, WZ}.
The reader can find results concerning to the existence, uniqueness and explicit solutions to the Burgers equation with time-dependent viscosity with suitable conditions for $G(t)$ in \cite{Crighton, CS, DE, Scott, WZ2, WZ, HZhang, ZW} and references cited therein.
The Burgers equation with time-dependent viscosity and linear damping
was studied in \cite{MVSK} and their results include explicit solutions for differents $G(t)$.\\
When $G(t)=\varepsilon t$ and $\varepsilon>0$, for systems of hyperbolic conservation laws with time-dependent viscosity we refered the works developed by Tupciev in \cite{Tupciev} and Dafermos in \cite{Dafermos}. The results obtained in \cite{Dafermos} and \cite{Tupciev} not including the delta shock waves solutions.
For systems of hyperbolic conservation laws with delta shock solutions
the reader may consult \cite{DS, Ercole, TZZ, HYang, YZ}.\\
When $G(t)$ is nonlinear, for systems of balance laws we refered the work \cite{Delacruz}.\\

Note that our proposal of the time-dependent viscous system \eqref{system_ld_eps} is a special case of the general systems of conservation laws with time-dependent viscous system. %and linear damping.
%Therefore, we consider the viscous system \eqref{system_ld_eps} with initial data \eqref{datoRiemann}.
 Observe that if $(\widehat{v},\widehat{u})$ solves
\begin{equation} \label{sysVis1}
\begin{cases}
\widehat{u}_t+\frac{1}{k+1}e^{-\alpha kt}(\widehat{u}^{k+1})_x=\varepsilon \frac{1}{\alpha k}e^{-\alpha kt}(1-e^{-\alpha kt})\widehat{u}_{xx},\\
\widehat{v}_t+e^{-\alpha kt}(\widehat{v} \widehat{u}^k)_x=0,
\end{cases}
\end{equation} 
with initial condition
\begin{equation} \label{datsysVisc1}
 (\widehat{v}(x,0),\widehat{u}(x,0))=\begin{cases}
                (v_-,u_-), &\text{if } x<0,\\
                (v_+,u_+), &\text{if } x>0,
               \end{cases}
\end{equation}
then $(v,u)$ defined by $(v,u)=(\widehat{v},\widehat{u}e^{-\alpha t})$ solves the problem \eqref{system_ld_eps}--\eqref{datoRiemann}. 
%Here and after, 
We denote $(\widehat{v}^\varepsilon,\widehat{u}^\varepsilon)$ as $(\widehat{v},\widehat{u})$ when there is no confusion.
In order to solve the problem \eqref{sysVis1}--\eqref{datsysVisc1}, we introduce the similarity variable $\xi$ and solutions to \eqref{sysVis1} should approach for large times a similarity solution $(\widehat{v},\widehat{u})$ to \eqref{sysVis1} of the form $\widehat{v}(x,t)=\widehat{v}(\xi)$, $\widehat{u}(x,t)=\widehat{u}(\xi)$ and $\xi= a(t) x$ for some suitable smooth function $a(t) \ge 0$ for $t>0$ (more details on the similarity methods can be found in \cite{Barenblatt, Henriksen, Luo, Polyanin, Sachdev, Suto} and references therein).
Therefore, we introduce the similarity variable $\xi=\frac{\alpha kx}{1-e^{-\alpha kt}}$ and the system \eqref{sysVis1} can be written as follows
\begin{equation} \label{sysVis2}
\begin{cases} 
- \xi  \widehat{u}_\xi+\frac{1}{k+1}(\widehat{u}^{k+1})_\xi = \varepsilon \widehat{u}_{\xi \xi},\\
-\xi \widehat{v}_\xi+(\widehat{v}\widehat{u}^k)_\xi=0,\\
\end{cases}
\end{equation} 
and the initial data \eqref{datsysVisc1} changes to the boundary condition
\begin{equation} \label{datsysVisc2}
(\widehat{v}(\pm \infty), \widehat{u}(\pm \infty))=(v_\pm,u_\pm).
\end{equation}
Note that when $\alpha \to 0+$, the similarity variable $\xi$ converges to $x/t$ which is well used in many methods to study the behavior and structure of solutions of nonlinear hyperbolic systems of conservation laws.
%If we introduce the similarity transformation $\xi=a(t)x$ with $v(x,t)=\widehat{v}(\xi)$ and $u(x,t)=b(t) \widehat{u}(\xi)$ for suitable smooth functions $a(t) \ge 0$ and $b(t) \ge 0$ for $t>0$, then, $a(t)=e^{-\alpha t}$, $b(t)=\frac{\alpha k}{1-e^{-\alpha kt}}$ and therefore we can write the system \eqref{system_ld_eps} directly as \eqref{sysVis2}. However, in this paper we are not interested in this transformation since we are going to show existence of delta shock solution for the homogeneous system of conservation laws with time-dependent coefficients \eqref{sysVis1} without viscosity, i.e. for the following system
%Therefore, n
Notice that when $\varepsilon \to 0+$, the system \eqref{sysVis1} becomes
\begin{equation} \label{sys1}
\begin{cases}
\widehat{u}_t+\frac{1}{k+1}e^{-\alpha kt}(\widehat{u}^{k+1})_x=0,\\
\widehat{v}_t+ e^{-\alpha kt} \left( \widehat{v} \widehat{u}^{k} \right)_x  =  0.
\end{cases}
\end{equation} 
%Then, as $(v(x,t),u(x,t))=(\widehat{v}(x,t), \widehat{u}(x,t) e^{-\alpha t})$, the solutions of \eqref{sys1} are using to obtain solutions for the original system \eqref{system_ld}. 
Using the vanishing viscosity method, and following works by Tan, Zhang and Zheng \cite{TZZ}  %Li and Yang \cite{LY} and Yang \cite{HYang} 
and Ercole \cite{Ercole} with some appropriate modifications, we show the existence of solutions for system \eqref{sysVis2} with boundary condition \eqref{datsysVisc2}.
After, we study the behavior of the solutions $(\widehat{v}^\varepsilon, \widehat{u}^\varepsilon)$ as $\varepsilon \to 0+$ to obtain classical Riemann solution and
delta shock wave solution for the system \eqref{sys1}. 
Finally, as $(v(x,t),u(x,t))=(\widehat{v}(x,t), \widehat{u}(x,t) e^{-\alpha t})$, the solutions of \eqref{sys1} are used to obtain solutions of the original system \eqref{system_ld}. \\

The outline of the remaining of the paper is as follows. In Section 2, we show the existence of solutions to the viscous system \eqref{sysVis2} with boundary condition \eqref{datsysVisc2}. In Section 3, we study the behavior of the solutions $(\widehat{v}^\varepsilon, \widehat{u}^\varepsilon)$ as $\varepsilon \to 0+$ and we solve the Riemann problem to the system \eqref{sysVis1} without viscosity. In Section 4, we show classical Riemann solution and delta shock solution for the nonhomogeneous system \eqref{system_ld}. Final remarks are given in Section 5.
%our conclusions are given in Section 4.

 \section{Existence of solutions to the viscous system \texorpdfstring{\eqref{sysVis2}-\eqref{datsysVisc2}}{(7)-(8)} }
 
 Considering the first equation in \eqref{sysVis2} with boundary conditions, we have
 \begin{equation} \label{ViscEq1}
 \begin{cases}
 -\xi \widehat{u}_\xi +\frac{1}{k+1}(\widehat{u}^{k+1} )_\xi = \varepsilon \widehat{u}_{\xi \xi},\\
 \widehat{u}(\pm \infty) = u_\pm.
 \end{cases}
 \end{equation}
Now, based on the ideas of Dafermos \cite{Dafermos}, we consider the following boundary value problem with parameters $\mu \in [0,1]$ and $R > 1$,
 \begin{equation} \label{ViscEq2}
 \begin{cases}
 -\xi \widehat{u}_\xi +\frac{\mu}{k+1} (\widehat{u}^{k+1} )_\xi = \varepsilon \widehat{u}_{\xi \xi},\\
 \widehat{u}(\pm R) = \mu u_\pm.
 \end{cases}
 \end{equation}
\begin{lemma} \label{Lemma2.1}
Let $\widehat{u}(\xi)$ be a solution of \eqref{ViscEq2} on $[-R,R]$ for some $\mu >0$. Suppose that $u_- \neq u_+$. Then, $\widehat{u}$  is a strictly %decreasing 
monotonic function on $[-R, R]$.
\end{lemma}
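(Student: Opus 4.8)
The plan is to reduce the second--order equation in \eqref{ViscEq2} to a first--order \emph{linear homogeneous} ODE for the derivative $w:=\widehat{u}_\xi$, and then to exploit the elementary fact that a nontrivial solution of such an equation never vanishes.

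First I would rewrite the equation. Since $\frac{\mu}{k+1}(\widehat{u}^{k+1})_\xi=\mu\,\widehat{u}^k\,\widehat{u}_\xi$, the equation in \eqref{ViscEq2} becomes
\[
\varepsilon\,\widehat{u}_{\xi\xi}=\bigl(\mu\,\widehat{u}^k(\xi)-\xi\bigr)\,\widehat{u}_\xi ,
\]
that is, with $w=\widehat{u}_\xi$,
\[
\varepsilon\,w_\xi=\bigl(\mu\,\widehat{u}^k(\xi)-\xi\bigr)\,w .
\]
Because $\widehat{u}$ is a classical (hence $C^2$) solution on $[-R,R]$, the coefficient $b(\xi):=\tfrac{1}{\varepsilon}\bigl(\mu\,\widehat{u}^k(\xi)-\xi\bigr)$ is continuous on $[-R,R]$, and $w$ solves $w_\xi=b(\xi)\,w$. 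Integrating, for any $\xi_0\in[-R,R]$,
\[
w(\xi)=w(\xi_0)\exp\!\left(\int_{\xi_0}^{\xi}b(s)\,ds\right),\qquad \xi\in[-R,R].
\]

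Next I would show $w$ is not identically zero. If $w\equiv 0$ on $[-R,R]$, then $\widehat{u}$ is constant there, so $\mu u_-=\widehat{u}(-R)=\widehat{u}(R)=\mu u_+$; since $\mu>0$ this forces $u_-=u_+$, contradicting the hypothesis. Hence there is $\xi_0$ with $w(\xi_0)\neq 0$. The representation formula above then shows that $w(\xi)$ has the same nonzero sign as $w(\xi_0)$ for every $\xi\in[-R,R]$, since the exponential factor is strictly positive. Therefore $\widehat{u}_\xi$ is either strictly positive on all of $[-R,R]$ or strictly negative on all of $[-R,R]$, which is precisely the claim that $\widehat{u}$ is strictly monotonic.

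I do not expect a genuine obstacle here: the only points needing care are the regularity required to pass from the equation to the representation formula (continuity of $b$, which follows from $\widehat{u}\in C^2$) and the observation that $\mu>0$ is what transfers $u_-\neq u_+$ into $\widehat{u}(-R)\neq\widehat{u}(R)$. As an alternative to the explicit integration, one could invoke uniqueness for the linear initial value problem $w_\xi=b(\xi)w$, $w(\xi_0)=0$, whose only solution is $w\equiv 0$; either route closes the argument.
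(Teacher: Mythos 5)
Your proof is correct and follows essentially the same route as the paper: both derive the representation formula $\widehat{u}'(\xi)=\widehat{u}'(\xi_0)\exp\bigl(\int_{\xi_0}^{\xi}\frac{\mu\widehat{u}^k(s)-s}{\varepsilon}\,ds\bigr)$ and conclude that a single zero of the derivative forces $\widehat{u}$ to be constant, contradicting $u_-\neq u_+$ (your explicit remark that $\mu>0$ is what converts this into $\widehat{u}(-R)\neq\widehat{u}(R)$ is a small but welcome clarification).
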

\begin{proof}
Observe that from \eqref{ViscEq2} we have that 
\begin{equation} \label{Ig1}
\widehat{u}'(\xi) = \widehat{u}'(\zeta) \exp \left( \int_\zeta^\xi \frac{\mu\widehat{u}^k(s)-s}{\varepsilon} ds \right)
\end{equation}
for any $\zeta \in [-R,R]$. 
Suppose $\xi_1 \in [-R, R]$ is a critical point of $\widehat{u}(\xi)$, which implies $\widehat{u}'(\xi_1)=0$. Then, from \eqref{Ig1} we have that $\widehat{u}'(\xi)=0$ for all $\xi \in [-R,R]$, and therefore $\widehat{u}(\xi)$ is constant on $[-R, R]$. But, this contradicts the fact that $u_- \neq u_+$. Thus, $\widehat{u}(\xi)$ is monotone. The monotonicity of $\widehat{u}(\xi)$ depends on the value of $\widehat{u}'(\xi_0)$. %In view of 
If $u_- >u_+$, then $\widehat{u}(\xi)$ is strictly decreasing on $[-R,R]$.
When $u_- <u_+$, we have that $\widehat{u}(\xi)$ is strictly increasing on $[-R,R]$.
\end{proof} 
 
\begin{theorem}
Suppose that $u_->u_+$.
For every $\varepsilon >0$, there exists a smooth solution (not necessarily unique) of
 \eqref{ViscEq1}.
\end{theorem}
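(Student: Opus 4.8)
The plan is to produce the solution on all of $\re$ as the limit, along a subsequence $R_j\to\infty$, of solutions of the truncated problem \eqref{ViscEq2} with $\mu=1$, and to obtain those truncated solutions by a Leray--Schauder (topological degree) argument in which $\mu\in[0,1]$ is the homotopy parameter, in the spirit of Dafermos \cite{Dafermos}, Tan--Zhang--Zheng \cite{TZZ} and Ercole \cite{Ercole}. For fixed $R>1$ and $\mu\in[0,1]$ I would first integrate the identity $\varepsilon\widehat u''=(\mu\widehat u^k-\xi)\widehat u'$ once: with $p=\widehat u'$ this is linear in $p$ with integrating factor the exponential appearing in \eqref{Ig1}, so $\widehat u$ is recovered from $\widehat u$ itself by two quadratures together with the endpoint data $\mu u_\pm$. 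This exhibits \eqref{ViscEq2} as a fixed point equation $\widehat u=T_\mu(\widehat u)$ on $C[-R,R]$, where $T_\mu$ maps into $C^1[-R,R]$ and is hence compact, depends continuously on $(\mu,\widehat u)$, and satisfies $T_0\equiv 0$; smoothness of any fixed point is automatic by bootstrapping in the ODE. Degree theory then yields a fixed point at $\mu=1$ once one has an a priori bound on all fixed points uniform in $\mu$ (and, conveniently, in $R$).

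The first key step is that a priori bound. I would show that any solution of \eqref{ViscEq2} takes values in a fixed bounded interval $I=I(u_-,u_+)$ independent of $\mu$ and $R$: by Lemma \ref{Lemma2.1} a nonconstant $\widehat u$ has no interior critical point, so it is monotone between its endpoint values $\mu u_\pm$, whence $\widehat u(\xi)\in[\min\{\mu u_+,\mu u_-\},\max\{\mu u_+,\mu u_-\}]\subseteq I$; this is the invariant-region estimate for the elliptic operator $\varepsilon(\cdot)''+(\xi-\widehat u^k)(\cdot)'$. The hypothesis $u_->u_+$ enters here to fix the geometry — for $\mu=1$ it gives exactly $u_+\le\widehat u\le u_-$ with $\widehat u$ strictly decreasing. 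With this uniform bound the degree argument closes and produces, for each $R>1$, a smooth strictly decreasing solution $\widehat u^R$ of $\varepsilon\widehat u''=(\widehat u^k-\xi)\widehat u'$ on $[-R,R]$ with $\widehat u^R(\pm R)=u_\pm$ and $u_+\le\widehat u^R\le u_-$; in particular $\int_{-R}^{R}|(\widehat u^R)'|\,d\xi=u_--u_+$.

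The remaining work is the limit $R\to\infty$. From \eqref{Ig1} with $\mu=1$ and the bound $u_+\le\widehat u^R\le u_-$, the integrating factor is dominated by $\exp(\tfrac1\varepsilon\int_0^\xi(b-|s|)\,ds)$ for a constant $b=b(u_\pm)$; using the fixed total variation $u_--u_+$ to bound $(\widehat u^R)'$ at the origin, this yields $|(\widehat u^R)'(\xi)|\le C_\varepsilon e^{-\xi^2/(4\varepsilon)}$ for $|\xi|$ large, with $C_\varepsilon$ independent of $R$; reading $\widehat u''$ and its further derivatives off the ODE then gives uniform $C^\ell_{\mathrm{loc}}(\re)$ bounds for every $\ell$. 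By Arzel\`a--Ascoli and a diagonal extraction, $\widehat u^{R_j}\to\widehat u$ in $C^2_{\mathrm{loc}}(\re)$, and $\widehat u$ is a smooth monotone solution of the ODE in \eqref{ViscEq1} with values in $[u_+,u_-]$, hence with limits at $\pm\infty$. Finally, the uniform Gaussian decay gives $|\widehat u^R(\xi)-u_\pm|\le\int_{|\xi|}^\infty C_\varepsilon e^{-s^2/(4\varepsilon)}\,ds$ for $\pm\xi>0$, so letting first $R\to\infty$ and then $|\xi|\to\infty$ forces $\widehat u(\pm\infty)=u_\pm$, and \eqref{ViscEq1} holds. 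I expect the main obstacle to be precisely this last point: obtaining derivative estimates near the moving endpoints $\pm R$ that are uniform in $R$, so that the prescribed boundary data survive the limit; by comparison, the invariant region, the interior regularity, and the abstract degree-theoretic existence on $[-R,R]$ are routine once Lemma \ref{Lemma2.1} is in hand.
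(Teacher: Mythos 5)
Your proposal is correct and follows essentially the same route as the paper: the paper derives the same $\mu$- and $R$-independent sup bound from Lemma \ref{Lemma2.1} and then simply invokes Theorem 3.1 of Dafermos \cite{Dafermos}, which packages the Leray--Schauder argument on $[-R,R]$ and the passage $R\to\infty$ that you carry out explicitly. So the only difference is that you reprove the cited machinery (truncated fixed-point problem, uniform Gaussian derivative bounds, diagonal limit recovering the boundary values at $\pm\infty$) rather than quoting it.
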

\begin{proof}
From Lemma \ref{Lemma2.1}, we have $\sup\limits_{-R<\xi<R} |u(\xi)| \le \max \{u_-,u_+ \}$ which does not depend on $\mu$ and $R$. Now, from Theorem 3.1 in \cite{Dafermos} we conclude that there exists a solution of \eqref{ViscEq1}. Moreover, if $u_- >u_+$, then the solution is decreasing on $(-\infty, \infty)$.
For $u_- <u_+$, the solution is increasing on $(-\infty, \infty)$.
\end{proof}

\begin{proposition} \label{Prop2.1}
Let $w(\xi)$ be a smooth solution of \eqref{ViscEq1}. Then,
$$ \left| w'(\xi) \right| \le \left| w'(0) \right| \exp \left( \frac{2u_-^k |\xi|-\xi^2}{2 \varepsilon} \right), \qquad -\infty <\xi <+\infty.$$
\end{proposition}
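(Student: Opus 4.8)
The statement follows quickly from the identity \eqref{Ig1} specialised to the full line, together with an elementary estimate of the exponent; here is how I would organise it. First I would turn the first equation of \eqref{ViscEq1} into a first-order ODE for $w'$: since $\frac{1}{k+1}(w^{k+1})' = w^{k}w'$, the equation reads $\varepsilon\,w'' = (w^{k}-\xi)\,w'$. Exactly as in the proof of Lemma~\ref{Lemma2.1}, $w'$ cannot vanish anywhere — a zero of $w'$ would force, via this linear equation, $w'\equiv 0$ and hence $w$ constant, contradicting $u_-\neq u_+$. Thus $\bigl(\log|w'|\bigr)' = (w^{k}-\xi)/\varepsilon$, and integrating from $0$ to $\xi$ gives
\[
|w'(\xi)| = |w'(0)|\,\exp\!\left(\frac{1}{\varepsilon}\int_{0}^{\xi}\bigl(w^{k}(s)-s\bigr)\,ds\right) = |w'(0)|\,\exp\!\left(\frac{1}{\varepsilon}\int_{0}^{\xi}w^{k}(s)\,ds-\frac{\xi^{2}}{2\varepsilon}\right)
\]
for every $\xi\in\re$.

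Next I would bound the term $\int_{0}^{\xi}w^{k}(s)\,ds$. By Lemma~\ref{Lemma2.1} (applied on $[-R,R]$ and passed to the limit $R\to\infty$, which is how the whole-line solution is produced in the theorem above) $w$ is monotone with $w(\pm\infty)=u_\pm$, so $\sup_{\xi\in\re}|w(\xi)|\le\max\{u_-,u_+\}=u_-$ and hence $|w^{k}(s)|\le u_-^{k}$ for all $s$. For $\xi\ge 0$ this gives $\int_{0}^{\xi}w^{k}(s)\,ds\le u_-^{k}\xi=u_-^{k}|\xi|$; for $\xi<0$ one writes $\int_{0}^{\xi}w^{k}(s)\,ds=-\int_{\xi}^{0}w^{k}(s)\,ds\le\int_{\xi}^{0}|w^{k}(s)|\,ds\le u_-^{k}(-\xi)=u_-^{k}|\xi|$. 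In either case $\frac{1}{\varepsilon}\int_{0}^{\xi}w^{k}(s)\,ds-\frac{\xi^{2}}{2\varepsilon}\le\frac{u_-^{k}|\xi|}{\varepsilon}-\frac{\xi^{2}}{2\varepsilon}=\frac{2u_-^{k}|\xi|-\xi^{2}}{2\varepsilon}$, and plugging this into the displayed identity yields the asserted inequality.

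The one point that deserves care is the global $L^{\infty}$ bound $\sup_{\xi\in\re}|w(\xi)|\le u_-$: on a bounded interval it is immediate from Lemma~\ref{Lemma2.1}, but on all of $\re$ it requires knowing that the solution of \eqref{ViscEq1} is still monotone, which is why I would invoke the construction in the preceding theorem rather than Lemma~\ref{Lemma2.1} directly. Once that is in hand, the rest is just differentiating and integrating the scalar ODE, and no further estimate is needed.
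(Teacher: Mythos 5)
Your argument is correct and is essentially the paper's own proof: integrating the linear first-order ODE for $w'$ (equivalently, multiplying by the integrating factor $\exp(\xi^{2}/(2\varepsilon))$, as the paper does) gives $w'(\xi)=w'(0)\exp\bigl(\tfrac{1}{\varepsilon}\int_{0}^{\xi}(w^{k}(s)-s)\,ds\bigr)$, and the bound $|w^{k}|\le u_-^{k}$ then yields the stated estimate. The only superfluous step is invoking the non-vanishing of $w'$: the identity \eqref{Ig1} holds in all cases (both sides vanish identically if $w'$ has a zero), so you can integrate directly without that case distinction.
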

\begin{proof}
Multiplying the equation of \eqref{ViscEq1} by $\exp( \frac{\xi^2}{2 \varepsilon} )$, we have
$$ \frac{d}{d\xi} \left( w'(\xi) \exp( \frac{\xi^2}{2 \varepsilon} ) \right) = \frac{1}{\varepsilon} w^k(\xi) w'(\xi)\exp( \frac{\xi^2}{2 \varepsilon} ) $$
which yields the estimate
$$ \left| w'(\xi) \right| \le \left| w'(0) \right| \exp \left( \frac{2u_-^k |\xi|-\xi^2}{2 \varepsilon} \right).$$
\end{proof}

\begin{theorem}
Let $u_1(\xi)$ and $u_2(\xi)$ be two smooth solutions of \eqref{ViscEq1}. Then, $u_1=u_2$.
\end{theorem}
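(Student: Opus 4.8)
The plan is to reduce the claim to a scalar second-order linear ODE for the difference that is governed by the maximum principle. First I would set $w=u_1-u_2$ and subtract the two copies of the first equation in \eqref{ViscEq1}. Since $\frac{1}{k+1}(u_i^{k+1})'=u_i^k u_i'$, the subtracted equation can be written as $\varepsilon w''+\xi w'=\psi'$, where $\psi:=\frac{1}{k+1}(u_1^{k+1}-u_2^{k+1})$. The point is that $\xi w'=(\xi w)'-w$, so this equation integrates once: setting
\[
\Phi:=\varepsilon w'+\xi w-\psi ,
\]
we get $\Phi'=w$.

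Next I would produce a closed equation for $\Phi$. Using the identity $a^{k+1}-b^{k+1}=(a-b)\sum_{j=0}^{k}a^{j}b^{k-j}$, write $\psi=\frac{1}{k+1}P\,w=\frac{1}{k+1}P\,\Phi'$ with $P(\xi):=\sum_{j=0}^{k}u_1(\xi)^{j}u_2(\xi)^{k-j}$; here $P$ is smooth and bounded, because each smooth solution of \eqref{ViscEq1} is monotone (by the integral representation for $u_i'$ already used in the proof of Lemma \ref{Lemma2.1}) with limits $u_\pm$, hence takes values between $u_-$ and $u_+$. Substituting $w=\Phi'$ and $w'=\Phi''$ into the definition of $\Phi$ yields
\[
\varepsilon\Phi''+\Bigl(\xi-\tfrac{1}{k+1}P(\xi)\Bigr)\Phi'-\Phi=0 .
\]
The zeroth-order coefficient is $-1<0$, so this operator satisfies the maximum principle: $\Phi$ cannot attain a positive maximum (nor a negative minimum) at an interior point $\xi_0$, since there $\Phi'(\xi_0)=0$ and $\Phi''(\xi_0)\le 0$, while the equation would force $\Phi(\xi_0)=\varepsilon\Phi''(\xi_0)\le 0$.

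It then remains to control $\Phi$ at $\xi=\pm\infty$. By Proposition \ref{Prop2.1} applied to $u_1$ and $u_2$, $|u_i'(\xi)|$ is dominated by $|u_i'(0)|\exp\bigl(\frac{2u_-^{k}|\xi|-\xi^{2}}{2\varepsilon}\bigr)$; integrating this Gaussian-type bound shows that $u_i(\xi)-u_\pm$ decays faster than every power of $1/|\xi|$ as $\xi\to\pm\infty$. Hence $\varepsilon w'\to 0$, $\xi w\to 0$, and $\psi=\frac{1}{k+1}Pw\to 0$ (as $P$ is bounded), so $\Phi(\pm\infty)=0$. If $\Phi\not\equiv 0$ then $\sup\Phi>0$ or $\inf\Phi<0$, and since $\Phi$ vanishes at both ends this extremum is attained at a finite point, contradicting the maximum principle. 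Therefore $\Phi\equiv 0$, whence $w=\Phi'\equiv 0$, i.e. $u_1\equiv u_2$.

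The main obstacle is the decay of $w$ at infinity needed to guarantee that $\xi w\to 0$ — this is exactly where Proposition \ref{Prop2.1} is essential, and one must handle the Gaussian tail estimate with a little care. The rest (the reduction to $\Phi$, the boundedness of $P$, and the one-dimensional maximum principle) is routine, and the argument uses neither the sign of $u_\pm$ nor which of the two is larger, so it also covers the case $u_-<u_+$ and the trivial case $u_-=u_+$.
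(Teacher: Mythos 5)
Your proof is correct, and it takes a genuinely different route from the paper's. The paper also sets $\widehat{U}=u_1-u_2$, but it keeps the linearized second-order equation for $\widehat{U}$ (writing the nonlinear difference with an averaged coefficient $h$), uses Proposition \ref{Prop2.1} only to guarantee $\xi\widehat{U}\to 0$ at infinity, and then derives the contradiction by integrating that equation over an interval $(a,b)$ between consecutive zeros of $\widehat{U}$: the identity $\varepsilon(\widehat{U}'(b)-\widehat{U}'(a))=\int_a^b\widehat{U}\,d\xi$ is incompatible with the signs of $\widehat{U}'$ at the endpoints. You instead integrate the subtracted equation once, pass to the primitive $\Phi=\varepsilon w'+\xi w-\psi$ with $\Phi'=w$, and obtain a closed ODE $\varepsilon\Phi''+\bigl(\xi-\tfrac{1}{k+1}P\bigr)\Phi'-\Phi=0$ whose strictly negative zeroth-order coefficient gives a pointwise maximum principle; combined with $\Phi(\pm\infty)=0$ (again from Proposition \ref{Prop2.1}) this forces $\Phi\equiv 0$ and hence $w\equiv 0$. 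What your version buys: it avoids any discussion of the structure of the zero set of $\widehat{U}$ (the paper's ``consecutive zeros'' step implicitly needs such an interval to exist, which requires a word of justification if zeros accumulate), it treats all orderings of $u_\pm$ uniformly, and the contradiction is purely local at one interior extremum. What the paper's version buys: it needs only one integration by parts and no algebraic factorization of $u_1^{k+1}-u_2^{k+1}$. Two small remarks on your write-up: the boundedness of $P$ is immediate (a continuous function on $\re$ with finite limits at $\pm\infty$ is bounded, no appeal to monotonicity is needed), and in fact boundedness of $P$ is not used in the maximum-principle step at all — what you actually need is only $\psi\to 0$ at $\pm\infty$, which already follows from $u_i(\pm\infty)=u_\pm$; your reliance on Proposition \ref{Prop2.1} for $\varepsilon w'\to 0$ and $\xi w\to 0$ is exactly parallel to the paper's use of it.
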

\begin{proof}
Let $\widehat{u}_1(\xi)$ and $\widehat{u}_2(\xi)$ be solutions of the problem \eqref{ViscEq1} and $\widehat{U}(\xi):=\widehat{u}_1(\xi)-\widehat{u}_2(\xi)$. Then, from \eqref{ViscEq1} we have that $\widehat{U}$ is a smooth solution of the boundary value problem
 \begin{equation} \label{ViscEq3}
 \begin{cases}
 -\xi \widehat{U}_\xi +\mu ( \widehat{U}h )_\xi = \varepsilon \widehat{U}_{\xi \xi},\\
 \widehat{U}(\pm \infty) = 0,
 \end{cases}
 \end{equation}
 where $h(\theta)=\int_0^1 (k+1) (\widehat{u}_1(\xi) + (\widehat{u}_2(\xi)-\widehat{u}_1(\xi)) \theta )^k d\theta$. We note that $h(\xi)$ is bounded.
 Observe that from Proposition \ref{Prop2.1}, we have
 $$ |\widehat{U}'(\xi)| \le (|u_1'(0)|+|u_2'(0)|) \exp \left( \frac{2 u_-^k |\xi|-\xi^2}{2 \varepsilon} \right) $$
 and $\widehat{U}'(\xi)$ decays rapidly to zero when $|\xi| \to \infty$ for each fixed $\varepsilon > 0$. Therefore, when $\lim\limits_{\xi \to \pm \infty} \widehat{U}(\xi)=0$ we have $\lim\limits_{\xi \to \pm \infty} \xi \widehat{U}(\xi) =0$. \\ %=\lim\limits_{\xi \to +\infty} \xi \widehat{U}(\xi)$.
 Let us suppose that $\widehat{U}$ is not the null function. Let $a$ and $b$ be consecutive zeros of $\widehat{U}$ with $-\infty \le a<b \le +\infty$.
 So, integrating \eqref{ViscEq3} by parts on $(a, b)$ we find
\begin{equation} \label{eqInt}
\varepsilon (\widehat{U}'(b)-\widehat{U}'(a)) = \int_a^b \widehat{U}(\xi) d\xi.
\end{equation}
 Now, if $\widehat{U}>0$ on $(a,b)$, then $\widehat{U}'(b) \le 0 \le \widehat{U}'(a)$ and $\int_a^b \widehat{U}(\xi) d\xi >0$. But, we have a contradiction with \eqref{eqInt} because in this case \eqref{eqInt} implies $\widehat{U}'(b) > \widehat{U}'(a)$. In similar way,
if $\widehat{U}<0$ on $(a,b)$, then $\widehat{U}'(b) \ge 0 \ge \widehat{U}'(a)$ and $\int_a^b \widehat{U}(\xi) d\xi >0$, which again contradicts with \eqref{eqInt}. Thus, we conclude that $\widehat{U} \equiv 0$.
\end{proof}
Putting $\widehat{u}(\xi)$ into the second equation of \eqref{sysVis2} with boundary conditions \eqref{datsysVisc2},  we get
\begin{equation} \label{Eqv2}
\begin{cases}
-\xi \widehat{v}_\xi + (\widehat{v} \widehat{u}^k )_\xi =0,\\
\widehat{v}(\pm \infty) =v_\pm.
\end{cases}
\end{equation}
The singularity point of \eqref{Eqv2} is given by the unique solution of $(\widehat{u}(\xi))^k = \xi$ and it is denoted by $\xi_\sigma^\varepsilon$.
Observe that the solution of \eqref{Eqv2} can be obtained by pasting together the two solutions in the regions $(-\infty,\xi_\sigma^\varepsilon)$ and $(\xi_\sigma^\varepsilon,+\infty)$.
Now integrating \eqref{Eqv2} from $-\infty$ to $\xi$ for $\xi < \xi_\sigma^\varepsilon$, we obtain
\begin{equation} \label{solRho1}
\widehat{v}_1(\xi)=v_- \exp \left( -\int_{-\infty}^\xi \frac{(\widehat{u}^k(s))'}{\widehat{u}^k(s)-s} ds \right).
\end{equation}
On the other hand, integrating \eqref{Eqv2} from $\xi$ to $+\infty$ for $\xi > \xi_\sigma^\varepsilon$, we obtain
\begin{equation} \label{solRho2}
\widehat{v}_2(\xi)=v_+ \exp \left( \int_\xi^{+\infty} \frac{(\widehat{u}^k(s))'}{\widehat{u}^k(s)-s} ds \right).
\end{equation}
\begin{lemma}
Suppose $u_->u_+$. Let 
 \begin{equation} \label{solweak1}
  \widehat{v}(\xi)=\begin{cases}
             \widehat{v}_1(\xi), &\mbox{if } \xi < \xi_\sigma^\varepsilon,\\
          \widehat{v}_2(\xi), &\mbox{if } \xi > \xi_\sigma^\varepsilon,
            \end{cases}
 \end{equation}
where $\xi_\sigma^\varepsilon$ is the unique solution of the equation $(\widehat{u}(\xi))^{k}=\xi$  (which solution exists because $u_->u_+$ and $\widehat{u}$ is decreasing), $\widehat{v}_1$ and $\widehat{v}_2$ are defined by \eqref{solRho1} and \eqref{solRho2}, respectively.
 Then $\widehat{v} \in L^1(-\infty,+\infty)$, $\widehat{v}$ is continuous in $(-\infty,\xi_\sigma^\varepsilon) \cup (\xi_\sigma^\varepsilon,+\infty)$ and it is a weak solution for 
 \begin{equation} \label{eq1}
	   -\xi \widehat{v}_\xi+(\widehat{v} \widehat{u}^k)_\xi=0.
	\end{equation} 
\end{lemma}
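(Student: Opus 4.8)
The plan is to rewrite \eqref{eq1} in a form where the two pieces $\widehat v_1,\widehat v_2$ appear as genuine classical solutions and the singular point becomes a removable issue. Expanding the derivative in \eqref{eq1} and regrouping, one checks that \eqref{eq1} is equivalent to the linear first--order equation
\[
\big((\widehat u^k(\xi)-\xi)\,\widehat v\big)_\xi+\widehat v=0 ,
\]
which, on any interval avoiding the zero set of $g(\xi):=\widehat u^k(\xi)-\xi$, is an ordinary linear ODE for $\widehat v$. Solving it on $(-\infty,\xi_\sigma^\varepsilon)$ with the normalization $\widehat v(-\infty)=v_-$ reproduces exactly \eqref{solRho1}, and solving it on $(\xi_\sigma^\varepsilon,+\infty)$ with $\widehat v(+\infty)=v_+$ reproduces \eqref{solRho2}. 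Thus, by construction, $\widehat v_1$ and $\widehat v_2$ are smooth, strictly positive, classical solutions of \eqref{eq1} on their respective half--lines, so the pasted function $\widehat v$ in \eqref{solweak1} is continuous on $(-\infty,\xi_\sigma^\varepsilon)\cup(\xi_\sigma^\varepsilon,+\infty)$ and attains the limits $v_\pm$ at $\pm\infty$.

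Next I would establish the integrability. Near $\mp\infty$ this is easy: $\widehat u\to u_\pm$ while, by Proposition \ref{Prop2.1}, $\widehat u'$ --- and hence $(\widehat u^k)'$ --- decays like a Gaussian, whereas $g(s)=\widehat u^k(s)-s\to\pm\infty$; so the integrands in \eqref{solRho1}--\eqref{solRho2} are absolutely integrable there and $\widehat v$ is bounded near $\pm\infty$. The only delicate point is the behaviour at the singular point $\xi_\sigma^\varepsilon$, the unique root of $g$. Because $\widehat u$ is strictly decreasing (Lemma \ref{Lemma2.1} together with the existence theorem), $\xi_\sigma^\varepsilon$ is a simple zero of $g$ with $g'(\xi_\sigma^\varepsilon)=(\widehat u^k)'(\xi_\sigma^\varepsilon)-1<0$; writing $A:=(\widehat u^k)'(\xi_\sigma^\varepsilon)$ and expanding the integrand in \eqref{solRho1}--\eqref{solRho2} as $\frac{(\widehat u^k)'(s)}{g(s)}=\frac{A}{(A-1)(s-\xi_\sigma^\varepsilon)}+O(1)$ near $\xi_\sigma^\varepsilon$, one finds
\[
\widehat v_i(\xi)=C_i\,|\xi-\xi_\sigma^\varepsilon|^{-\beta}\big(1+o(1)\big),\qquad \beta:=\frac{A}{A-1}<1 ,
\]
as $\xi\to\xi_\sigma^\varepsilon$, with constants $C_i>0$. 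Since $\beta<1$, each $\widehat v_i$ is integrable up to $\xi_\sigma^\varepsilon$, which is exactly the missing ingredient for $\widehat v\in L^1$; moreover the product
\[
g(\xi)\,\widehat v_i(\xi)=g'(\xi_\sigma^\varepsilon)(\xi-\xi_\sigma^\varepsilon)\,C_i|\xi-\xi_\sigma^\varepsilon|^{-\beta}\big(1+o(1)\big)\longrightarrow 0
\]
as $\xi\to\xi_\sigma^\varepsilon$, because $1-\beta>0$; in other words $(\widehat u^k-\xi)\widehat v$ extends continuously by the value $0$ across $\xi_\sigma^\varepsilon$.

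With these two facts the weak formulation is routine. For $\phi\in C_c^\infty(\re)$ the claim is $\int_{\re}\big[\widehat v\,\phi-(\widehat u^k-\xi)\,\widehat v\,\phi'\big]\,d\xi=0$. I would split each integral at $\xi_\sigma^\varepsilon$; on each side $\widehat v=\widehat v_i$ is a classical solution of $\big((\widehat u^k-\xi)\widehat v_i\big)_\xi+\widehat v_i=0$, so integrating by parts on $(-\infty,\xi_\sigma^\varepsilon)$ and on $(\xi_\sigma^\varepsilon,+\infty)$ leaves only boundary terms: those at $\pm\infty$ vanish because $\phi$ is compactly supported, and those at $\xi_\sigma^\varepsilon$ equal $\pm\,(\widehat u^k-\xi)\widehat v_i\,\phi\big|_{\xi_\sigma^\varepsilon}=0$ by the limit established above. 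Adding the two contributions gives the identity, so $\widehat v$ is a weak solution of \eqref{eq1}.

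The main obstacle is the middle step --- identifying the precise algebraic blow--up rate $|\xi-\xi_\sigma^\varepsilon|^{-\beta}$ of $\widehat v_i$ near the singular point and verifying $\beta<1$; this amounts to knowing that $\xi_\sigma^\varepsilon$ is a simple zero of $\widehat u^k(\xi)-\xi$ with $(\widehat u^k)'(\xi_\sigma^\varepsilon)<1$, which is precisely where strict monotonicity of $\widehat u$ (hence $\widehat u'\neq 0$ throughout, via \eqref{Ig1}) is used. Once this rate and the consequent vanishing of $(\widehat u^k-\xi)\widehat v$ at $\xi_\sigma^\varepsilon$ are secured, the reduction to the linear ODE, the Gaussian decay at infinity from Proposition \ref{Prop2.1}, and the integration--by--parts bookkeeping are all straightforward.
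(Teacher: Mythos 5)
Your proposal is correct and follows the same overall skeleton as the paper (treat $\widehat{v}_1,\widehat{v}_2$ as classical solutions on the two half-lines, establish local integrability of $\widehat{v}$ across $\xi_\sigma^\varepsilon$ together with $(\widehat{u}^k-\xi)\widehat{v}\to 0$ at $\xi_\sigma^\varepsilon$, then verify the weak identity by cutting near $\xi_\sigma^\varepsilon$ and passing to the limit --- your integration-by-parts bookkeeping is exactly the paper's $I_1+I_2+I_3$ decomposition), but you obtain the two key facts by a genuinely different argument. You expand the integrand $(\widehat{u}^k)'/(\widehat{u}^k-\xi)$ of \eqref{solRho1}--\eqref{solRho2} around a \emph{simple} zero of $g(\xi)=\widehat{u}^k(\xi)-\xi$ and extract the exact algebraic blow-up $|\xi-\xi_\sigma^\varepsilon|^{-\beta}$ with $\beta=A/(A-1)<1$, $A=(\widehat{u}^k)'(\xi_\sigma^\varepsilon)$; this requires the nondegeneracy $g'(\xi_\sigma^\varepsilon)<0$, which you correctly tie to strict monotonicity of $\widehat{u}$ (Lemma \ref{Lemma2.1}), under the same implicit sign conventions the paper itself uses. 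The paper instead integrates \eqref{eq4} once and observes that the primitive $p(\xi)=\int_{\xi_1}^{\xi}\widehat{v}_1$ solves the linear ODE $a(\xi)p'+p=A_1$ with $a=\widehat{u}^k-\xi$, whose explicit solution gives $p\to A_1$ and $a\widehat{v}_1\to 0$ using only $a>0$ and the Lipschitz-type bound $a(\xi)=O(|\xi-\xi_\sigma^\varepsilon|)$. So your route yields sharper quantitative information (the precise blow-up exponent) at the price of a slightly stronger structural input (a simple zero), whereas the paper's trick is softer and would survive even a degenerate zero of $g$, though it gives no rate. One shared, harmless looseness: like the paper, you really prove $\widehat{v}\in L^1_{loc}$ plus boundedness near $\pm\infty$ (since $\widehat{v}\to v_\pm\neq 0$, membership in $L^1(-\infty,+\infty)$ as literally stated cannot hold), so this is not a gap relative to the paper's own proof.
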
 
 
\begin{proof}
Note that from the formula \eqref{solRho1}, $\widehat{v}_1(\xi)$ is monotonically increasing (or decreasing) when $v_-<0$ (or $v_->0$) in the interval $(-\infty,\xi_\sigma^\varepsilon)$, and from  \eqref{solRho2} that $\widehat{v}_2(\xi)$ is monotonically decreasing (or increasing) when $v_-<0$ (or $v_->0$) in the interval $(\xi_\sigma^\varepsilon, +\infty)$. Also, we have 
$$ \lim_{\xi \to \xi_\sigma^\varepsilon-} \widehat{v}_1(\xi) = \pm \infty, \quad
\lim_{\xi \to \xi_\sigma^\varepsilon+} \widehat{v}_2(\xi) = \pm \infty. $$

 The equation \eqref{eq1} can be rewritten as
 \begin{equation} \label{eq4}
  ((\widehat{u}(\xi))^k-\xi)\widehat{v}'+\widehat{v}((\widehat{u}(\xi))^k)'=0.
 \end{equation}
 Now, we can show that $\widehat{v} \in L^1[\xi_1,\xi_2]$ for any interval $[\xi_1,\xi_2]$ containing $\xi_\sigma^\varepsilon$. In fact,
integrating \eqref{eq4} on $[\xi_1,\xi]$ for $\xi_1<\xi<\xi_\sigma^\varepsilon$ , we get
 \begin{equation} \label{eq5}
  ((\widehat{u}(\xi))^k-\xi)\widehat{v}_1(\xi)-((\widehat{u}(\xi_1))^k-\xi_1)\widehat{v}_1(\xi_1)+  \int_{\xi_1}^\xi \widehat{v}_1(s)ds=0.
 \end{equation}
 Let
 $$ p(\xi)=\int_{\xi_1}^\xi \widehat{v}_1(s)ds, \quad A_1=((\widehat{u}(\xi_1))^k-\xi_1)\widehat{v}_1(\xi_1) \quad \mbox{ and } a(\xi)=((\widehat{u}(\xi))^k-\xi). $$
 Then \eqref{eq5} can be written as
 \begin{equation*}
  \begin{cases}
   a(\xi)p'(\xi)+p(\xi)=A_1,\\
   p(\xi_1)=0.
  \end{cases}
 \end{equation*}
It follows that
$$ p(\xi)=A_1 \left\{ 1-\exp \left( -\int_{\xi_1}^\xi \frac{ds}{a(s)} \right) \right\}. $$
Noting that $a(\xi)>0$ and $a(\xi)=O(|\xi-\xi_\sigma|)$ as $\xi \to \xi_\sigma^\varepsilon-$, we obtain
\begin{equation} \label{eqA}
 \lim_{\xi \to \xi_\sigma^\varepsilon-} \int_{\xi_1}^\xi \widehat{v}_1(s) ds = \lim_{\xi \to \xi_\sigma^\varepsilon-} p(\xi)=A_1. %\equiv B_1,
\end{equation}
Hence
\begin{equation} \label{eqEst1}
 \lim_{\xi \to \xi_\sigma^\varepsilon-} ((\widehat{u}(\xi))^k-\xi)\widehat{v}_1(\xi)=0.
\end{equation}
Similarly, one can get
\begin{align}
 \lim_{\xi \to \xi_\sigma^\varepsilon+} \int_{\xi}^{\xi_2} \widehat{v}_2(s) ds =A_2, \label{eqB}\\
 \lim_{\xi \to \xi_\sigma^\varepsilon+} ((\widehat{u}(\xi))^k-\xi)\widehat{v}_2(\xi)=0, \nonumber
\end{align}
where $A_2=((\widehat{u}(\xi_2))^k-\xi_2)\widehat{v}_2(\xi_2)$. The equalities \eqref{eqA} and \eqref{eqB} imply that $\widehat{v}(\xi) \in L^1([\xi_1,\xi_2])$.\\
Given an arbitrary function $\phi \in C_0^\infty ([\xi_1,\xi_2])$, we can show that
\begin{equation*}
 I \equiv -\int_{\xi_1}^{\xi_2} ((\widehat{u}(\xi))^k-\xi)\widehat{v}(\xi) \phi'(\xi) d\xi +  \int_{\xi_1}^{\xi_2} \widehat{v}(\xi)\phi(\xi) d\xi=0.
\end{equation*}
Indeed, for any $\widetilde{\xi}_1, \widetilde{\xi}_2$ such that $\xi_1<\widetilde{\xi}_1 <\xi_\sigma^\varepsilon <\xi_2 <R$  we can write $I=I_1+I_2+I_3$, where 
\begin{align*}
 I_1&= \int_{\xi_1}^{\widetilde{\xi_1}} (-((\widehat{u}(\xi))^k-\xi)\widehat{v}(\xi) \phi'(\xi)+\widehat{v}(\xi)\phi(\xi)) d\xi,\\
 I_2&= \int_{\widetilde{\xi_1}}^{\widetilde{\xi_2}} (-((\widehat{u}(\xi))^k-\xi)\widehat{v}(\xi) \phi'(\xi)+\widehat{v}(\xi)\phi(\xi)) d\xi \mbox{ and }\\
 I_3&= \int_{\widetilde{\xi_2}}^{\xi_2} (-((\widehat{u}(\xi))^k-\xi)\widehat{v}(\xi) \phi'(\xi)+\widehat{v}(\xi)\phi(\xi)) d\xi.
\end{align*}
Observe that 
\begin{align*}
 |I_1|&=\left| -((\widehat{u}(\widetilde{\xi_1}))^k-\widetilde{\xi}_1)\widehat{v}_1(\widetilde{\xi}_1)\phi(\widetilde{\xi}_1)+\int_{\xi_1}^{\widetilde{\xi_1}} ((((\widehat{u}(\xi))^k-\xi)\widehat{v}(\xi))' \phi(\xi)+\widehat{v}(\xi)\phi(\xi))) d\xi \right|\\
 &=\left| ((\widehat{u}(\widetilde{\xi}_1))^k-\widetilde{\xi}_1)\widehat{v}_1(\widetilde{\xi}_1)\phi(\widetilde{\xi}_1) \right|.
\end{align*}
By \eqref{eqEst1}, we have that $$\lim_{\widetilde{\xi}_1 \to \xi_\sigma^\varepsilon-} |I_1|=\lim_{\widetilde{\xi}_1 \to \xi_\sigma^\varepsilon-} \left| ((\widehat{u}(\widetilde{\xi}_1))^k-\widetilde{\xi}_1)\widehat{v}_1(\widetilde{\xi}_1)\phi(\widetilde{\xi}_1) \right|=0.$$
In similar way, we show that 
$$\lim_{\widetilde{\xi}_2 \to \xi_\sigma^\varepsilon+} |I_3|=\lim_{\widetilde{\xi}_2 \to \xi_\sigma^\varepsilon+} \left| ((\widehat{u}(\widetilde{\xi}_2))^k-\widetilde{\xi}_2)\widehat{v}_2(\widetilde{\xi}_2)\phi(\widetilde{\xi}_2) \right|=0.$$
Since $\widehat{v} \in L^1([\xi_1,\xi_2])$, 
$$ |I_2| \le \int_{\widetilde{\xi_1}}^{\widetilde{\xi_2}} |-((\widehat{u}(\xi))^k-\xi)\phi'(\xi)+\phi(\xi)||\widehat{v}(\xi)|d\xi \to 0, \quad \mbox{ as } \widetilde{\xi}_1\to \xi_\sigma^\varepsilon-, \widetilde{\xi}_2\to \xi_\sigma^\varepsilon+. $$
But $I$ is independent of $\widetilde{\xi}_1$ and $\widetilde{\xi}_2$, so $I=0$. Therefore, $\widehat{v}$ defined in \eqref{solweak1} is a weak solution.
\end{proof}

\begin{lemma} \label{Lemma2.3}
Suppose $u_-<u_+$. Let 
 \begin{equation*} 
  \widehat{v}(\xi)=\begin{cases}
             \widehat{v}_1(\xi), &\mbox{if } \xi < \xi_{\sigma_1}^\varepsilon,\\
             0, &\mbox{if }  \xi_{\sigma_1}^\varepsilon \le \xi \le \xi_{\sigma_2}^\varepsilon,\\
          \widehat{v}_2(\xi), &\mbox{if } \xi > \xi_{\sigma_2}^\varepsilon,
            \end{cases}
 \end{equation*}
where $\widehat{v}_1$ and $\widehat{v}_2$ are defined by \eqref{solRho1} and \eqref{solRho2}, respectively, $\xi_{\sigma_1}^\varepsilon \le \xi_{\sigma_2}^\varepsilon$
satisfying $\xi_{\sigma_1}^\varepsilon = \min \{ \xi : (\widehat{u}(\xi))^k=\xi \}$,
$\xi_{\sigma_2}^\varepsilon = \max \{ \xi : (\widehat{u}(\xi))^k=\xi \}$ and 
$\lim\limits_{\xi \to \xi_{\sigma_1}^\varepsilon-} \widehat{v}_1(\xi)=\lim\limits_{\xi \to \xi_{\sigma_2}^\varepsilon+} \widehat{v}_2(\xi)=0$.
 Then $\widehat{v} \in L^1(-\infty,+\infty)$, $\widehat{v}_1$ is decreasing in
 $(-\infty,\xi_{\sigma_1}^\varepsilon)$,  $\widehat{v}_2$ is increasing in
 $(\xi_{\sigma_2}^\varepsilon,+\infty)$, $\widehat{v}$ is continuous on the intervals $(-\infty,\xi_{\sigma_1}^\varepsilon)$ and $(\xi_{\sigma_2}^\varepsilon,+\infty)$, and it is a weak solution for $ -\xi \widehat{v}_\xi+(\widehat{v} \widehat{u}^k)_\xi=0$.
% \begin{equation}
%	   -\xi \widehat{v}_\xi+(\widehat{v} \widehat{u}^k)_\xi=0.
%	\end{equation} 
\end{lemma}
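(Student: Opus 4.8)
The plan is to run the argument of the previous lemma, making the changes forced by the fact that here $u_-<u_+$, so by Lemma~\ref{Lemma2.1} the profile $\widehat{u}$ is strictly \emph{increasing}, and consequently the singular set $\{\xi:(\widehat{u}(\xi))^{k}=\xi\}$ of \eqref{eq1} is sandwiched between its extreme crossings $\xi_{\sigma_1}^\varepsilon$ and $\xi_{\sigma_2}^\varepsilon$, on the closed interval between which we have \emph{declared} $\widehat{v}\equiv 0$. First I would read the monotonicity and continuity off the explicit formulas. For $\xi<\xi_{\sigma_1}^\varepsilon$ one has $(\widehat{u}(\xi))^{k}-\xi>0$ (it is positive as $\xi\to-\infty$ and $\xi_{\sigma_1}^\varepsilon$ is the first zero), while $((\widehat{u})^{k})'\ge 0$ since $\widehat{u}$ increases; hence the integrand in \eqref{solRho1} is $\ge 0$, so $\widehat{v}_1$ is continuous and nonincreasing on $(-\infty,\xi_{\sigma_1}^\varepsilon)$, decreasing from $v_-$ at $-\infty$. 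Symmetrically, for $\xi>\xi_{\sigma_2}^\varepsilon$ one has $(\widehat{u}(\xi))^{k}-\xi<0$, the integrand in \eqref{solRho2} is $\le 0$, and $\widehat{v}_2$ is continuous and nondecreasing on $(\xi_{\sigma_2}^\varepsilon,+\infty)$, increasing up to $v_+$ at $+\infty$; this also shows $\widehat{v}$ is continuous on each of the two infinite intervals.

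The next step is local integrability of $\widehat{v}$ together with the two limit relations that the weak formulation needs, and here the case $u_-<u_+$ is actually \emph{softer} than the decreasing case: since $\widehat{v}_1$ is monotone between the finite values $v_-$ and $\lim_{\xi\to\xi_{\sigma_1}^\varepsilon-}\widehat{v}_1(\xi)=0$, it is bounded, so $\widehat{v}\in L^1$ on every bounded interval (on $[\xi_{\sigma_1}^\varepsilon,\xi_{\sigma_2}^\varepsilon]$ trivially, since $\widehat{v}=0$ there), and likewise for $\widehat{v}_2$. Moreover $((\widehat{u}(\xi))^{k}-\xi)\widehat{v}_1(\xi)\to 0$ as $\xi\to\xi_{\sigma_1}^\varepsilon-$ and $((\widehat{u}(\xi))^{k}-\xi)\widehat{v}_2(\xi)\to 0$ as $\xi\to\xi_{\sigma_2}^\varepsilon+$, now simply because both factors tend to $0$. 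If one prefers, the vanishing $\widehat{v}_1(\xi_{\sigma_1}^\varepsilon-)=0$ can be re-derived, exactly as in the previous proof, by rewriting \eqref{eq1} as the linear ODE \eqref{eq4} for $p(\xi)=\int_{\xi_1}^{\xi}\widehat{v}_1$ and using that $a(\xi)=(\widehat{u}(\xi))^{k}-\xi$ is positive on $(\xi_1,\xi_{\sigma_1}^\varepsilon)$ and vanishes to first order at $\xi_{\sigma_1}^\varepsilon$, which makes $\int ds/a(s)$ diverge; the first-order vanishing is where the strict monotonicity $\widehat{u}'\neq 0$ from \eqref{Ig1} (hence $((\widehat{u})^{k})'\neq0$ away from a zero of $\widehat{u}$) is used.

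Finally, for the weak-solution property I would take $\phi\in C_0^\infty$ and cut the integral $I=-\int((\widehat{u})^{k}-\xi)\widehat{v}\,\phi'+\int\widehat{v}\,\phi$ at points $\widetilde{\xi}_1<\xi_{\sigma_1}^\varepsilon$ and $\widetilde{\xi}_2>\xi_{\sigma_2}^\varepsilon$ chosen close to the singular endpoints, so that $I=I_1+I_2+I_3$ with $I_1$ over $(-\infty,\widetilde{\xi}_1)$, $I_3$ over $(\widetilde{\xi}_2,+\infty)$, and $I_2$ over $[\widetilde{\xi}_1,\widetilde{\xi}_2]$. On the first and third pieces $\widehat{v}_1,\widehat{v}_2$ are classical solutions of \eqref{eq4}, so integration by parts cancels the volume term and leaves only the boundary contributions $\pm((\widehat{u}(\widetilde{\xi}_i))^{k}-\widetilde{\xi}_i)\widehat{v}(\widetilde{\xi}_i)\phi(\widetilde{\xi}_i)$, which go to $0$ as $\widetilde{\xi}_1\to\xi_{\sigma_1}^\varepsilon-$, $\widetilde{\xi}_2\to\xi_{\sigma_2}^\varepsilon+$ by the limits above; the middle piece $I_2$ tends to $0$ in the same limit because $\widehat{v}\in L^1$ there (and on the sub-interval $[\xi_{\sigma_1}^\varepsilon,\xi_{\sigma_2}^\varepsilon]$ it contributes exactly $0$). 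Since $I$ does not depend on $\widetilde{\xi}_1,\widetilde{\xi}_2$, letting them tend to the singular points gives $I=0$. I expect the only real obstacle to be the book-keeping around the singular set — confirming that $\{(\widehat{u})^{k}=\xi\}$ lies in $[\xi_{\sigma_1}^\varepsilon,\xi_{\sigma_2}^\varepsilon]$ with transversal extreme crossings and that $\widehat{v}_1,\widehat{v}_2$ genuinely vanish there — since once those are in hand the remaining computations are a direct transcription of the previous lemma.
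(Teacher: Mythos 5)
Your outline follows the same overall scheme as the paper's proof (monotonicity and continuity read off \eqref{solRho1}--\eqref{solRho2}, boundary limits at the singular points, then a three-piece verification of the weak formulation), but there is a genuine gap at the one place where this lemma differs from the preceding one: the vanishing limits $\lim_{\xi\to\xi_{\sigma_1}^\varepsilon-}\widehat{v}_1(\xi)=\lim_{\xi\to\xi_{\sigma_2}^\varepsilon+}\widehat{v}_2(\xi)=0$. You cannot treat these as hypotheses: $\widehat{v}_1$ and $\widehat{v}_2$ are already completely determined by \eqref{solRho1} and \eqref{solRho2}, so these limits are assertions to be proved, and in fact they are the main content of the paper's proof (and what produces the vacuum state later). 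Your fallback re-derivation ``exactly as in the previous proof,'' via the ODE $a(\xi)p'(\xi)+p(\xi)=A_1$ for $p(\xi)=\int_{\xi_1}^{\xi}\widehat{v}_1(s)\,ds$, does not deliver this: that argument only yields $p(\xi)\to A_1$, i.e. $\bigl((\widehat{u}(\xi))^k-\xi\bigr)\widehat{v}_1(\xi)\to 0$ together with local integrability, and in the case $u_->u_+$ these very conclusions coexist with $\widehat{v}_1\to\pm\infty$ at the singular point. So the product limit it gives is strictly weaker than $\widehat{v}_1\to 0$, and nothing in your sketch bridges that difference.

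What is actually needed, and what the paper proves, is that the exponent in \eqref{solRho2} diverges: $\lim_{\xi\to\xi_{\sigma_2}^\varepsilon+}\int_{\xi}^{+\infty}\frac{(\widehat{u}^k(s))'}{s-\widehat{u}^k(s)}\,ds=+\infty$ (and the mirror statement at $\xi_{\sigma_1}^\varepsilon$ for \eqref{solRho1}). The paper obtains this from a mean-value estimate: since $\widehat{u}$ is increasing, $s-\widehat{u}^k(s)\le s-\widehat{u}^k(\xi)$ for $s\ge\xi$, so $\int_{\xi}^{R}\frac{(\widehat{u}^k(s))'}{s-\widehat{u}^k(s)}\,ds\ge -(\widehat{u}^k(\zeta))'\ln\left(\frac{\xi-\widehat{u}^k(\xi)}{R-\widehat{u}^k(\xi)}\right)\to+\infty$, which also uses a positive lower bound for $(\widehat{u}^k)'$ on $[\xi_{\sigma_2}^\varepsilon,R]$. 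Your observation that $a(\xi)=(\widehat{u}(\xi))^k-\xi$ vanishes at most to first order, so that $\int ds/a(s)$ diverges, is the right germ, but it has to be fed into the exponential formulas \eqref{solRho1}--\eqref{solRho2} together with that lower bound on $(\widehat{u}^k)'$, not into the $p$-equation. Once these limits are in hand, the rest of your sketch (signs and monotonicity of $\widehat{v}_1,\widehat{v}_2$, boundedness near the singular points giving local integrability, and the splitting $I=I_1+I_2+I_3$ with the middle interval contributing zero) is correct and matches the paper's intent; the paper additionally records the existence of $\xi_{\sigma_1}^\varepsilon,\xi_{\sigma_2}^\varepsilon$ by the intermediate value theorem and checks that the equation forces $\widehat{v}\equiv 0$ on $[\xi_{\sigma_1}^\varepsilon,\xi_{\sigma_2}^\varepsilon]$, points you flag only in passing.
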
 
\begin{proof}
Observe that $u_-<u_+$ implies $\widehat{u}$ is  increasing.
Consider now the function $s \mapsto s- \widehat{u}^k(s)$ which is continuous and approaches $\pm \infty$ as $s \to \pm \infty$. Hence, there exist finite quantities $\xi_{\sigma_1}^\varepsilon = \min \{ \xi : (\widehat{u}(\xi))^k=\xi \}$
and $\xi_{\sigma_2}^\varepsilon = \max \{ \xi : (\widehat{u}(\xi))^k=\xi \}$.
One has $s-(\widehat{u}(s))^k <0$ on $(-\infty, \xi_{\sigma_1}^\varepsilon)$ and 
$s-(\widehat{u}(s))^k >0$ on $( \xi_{\sigma_2}^\varepsilon, +\infty)$.
Moreover, we can get $\xi_{\sigma_1}^\varepsilon \le \xi_{\sigma_2}^\varepsilon$.
We now claim that 
\begin{equation} \label{IntInf1}
\lim_{\xi \to \xi_{\sigma_2}^\varepsilon+}\int_{\xi}^{+\infty} \frac{(\widehat{u}^k(s))'}{s-\widehat{u}^k(s)}ds = +\infty.
\end{equation}
In fact, for $R$ fixed and $\xi_{\sigma_2}^\varepsilon < \xi < R$ we have
\begin{align*}
\int_{\xi}^R \frac{(\widehat{u}^k(s))'}{s-\widehat{u}^k(s)}ds &=
(\widehat{u}^k(\zeta))' \int_{\xi}^R \frac{ds}{s-\widehat{u}^k(s)} 
\ge (\widehat{u}^k(\zeta))' \int_{\xi}^R \frac{ds}{s-\widehat{u}^k(\xi)} \\
&= -(\widehat{u}^k(\zeta))' \ln \left( \frac{\xi-\widehat{u}^k(\xi)}{R-\widehat{u}^k(\xi)}    \right) \to +\infty, \quad \mbox{as } \xi \to \xi_{\sigma_2}^\varepsilon+,
\end{align*}
where $\xi \le \zeta \le R$. Now, from \eqref{solRho2} and \eqref{IntInf1} we get
$$ \lim_{\xi \to \xi_{\sigma_2}^\varepsilon+} \widehat{v}_2(\xi) =0. $$
In a similar way, we can obtain $\lim\limits_{\xi \to \xi_{\sigma_1}^\varepsilon-} \widehat{v}_1(\xi) =0$.
The monotonicity of $\widetilde{v}_1$ and $\widehat{v}_2$ is obvious.
 When $\xi_{\sigma_1}^\varepsilon \le \xi \le \xi_{\sigma_2}^\varepsilon$, from \eqref{eq4} we have
$$ \int_{\xi_{\sigma_1}^\varepsilon}^{\xi_{\sigma_2}^\varepsilon} ((\widehat{u}(\xi))^k \widehat{v}'-\xi \widehat{v}'+\widehat{v}((\widehat{u}(\xi))^k)' d\xi =0$$
or 
$$ \left. (((\widehat{u}(\xi))^k -\xi) \widehat{v}(\xi) \right|_{\xi_{\sigma_1}^\varepsilon}^{\xi_{\sigma_2}^\varepsilon}
+ \int_{\xi_{\sigma_1}^\varepsilon}^{\xi_{\sigma_2}^\varepsilon} \widehat{v}(\xi) d\xi =0$$
which implies that $\widehat{v}(\xi)=0$.
\end{proof}
\section{The limit solutions of \texorpdfstring{\eqref{sysVis1}--\eqref{datsysVisc1}}{(5)--(6)} as viscosity vanishes}
In this section, we are interested in analyzing the behavior of the solutions $(\widehat{v}^\varepsilon,\widehat{u}^\varepsilon)$ of \eqref{sysVis2}--\eqref{datsysVisc2} as $\varepsilon \to 0+$ to stablished the solutions of \eqref{sysVis1}--\eqref{datsysVisc1}.\\

{\bf Case 1. $u_->u_+$} 
\begin{lemma} \label{lemmaA}
 Let $\xi_\sigma^\varepsilon$ be the unique point satisfying $(\widehat{u}^\varepsilon(\xi_\sigma^\varepsilon))^k=\xi_\sigma^\varepsilon$, and let $\xi_\sigma$ be the limit $ \xi_\sigma=\lim\limits_{\varepsilon \to 0+} \xi_\sigma^\varepsilon $
 (passing to a subsequence if necessary). Then for any $\eta >0$,
 \begin{align*}
  &\lim_{\varepsilon \to 0+} \widehat{u}_\xi^\varepsilon(\xi)=0, \qquad \mbox{for } |\xi-\xi_\sigma| \ge \eta,\\
  &\lim_{\varepsilon \to 0+} \widehat{u}^\varepsilon(\xi)=\begin{cases}
                                                 u_-, &\mbox{if } \xi \le \xi_\sigma -\eta,\\
                                                 u_+, &\mbox{if } \xi \ge \xi_\sigma +\eta,\\
                                                \end{cases}
 \end{align*}
uniformly in the above intervals.
Moreover, $\xi_\sigma = \frac{1}{k+1}\sum\limits_{j=0}^k u_-^{k-j}u_+^j$
and $\xi_\sigma[u]-\frac{1}{k+1}[u^{k+1}]=0$.
\end{lemma}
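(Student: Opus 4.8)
The plan is to combine the monotonicity and the one–signed convexity of $\widehat{u}^{\varepsilon}$ supplied by \eqref{ViscEq1} with a Rankine–Hugoniot identity obtained by letting $\varepsilon\to 0+$ in the weak form of \eqref{ViscEq1}. First I record the structure of $\widehat{u}^{\varepsilon}$. By Lemma \ref{Lemma2.1} and the existence/uniqueness theorems above, $\widehat{u}^{\varepsilon}$ is the unique smooth, strictly decreasing solution of \eqref{ViscEq1} on $\re$ with $\widehat{u}^{\varepsilon}(\pm\infty)=u_{\pm}$, so $u_{+}\le\widehat{u}^{\varepsilon}\le u_{-}$ and, by Proposition \ref{Prop2.1}, $\widehat{u}^{\varepsilon}_{\xi}\in L^{1}(\re)$ with $\int_{\re}|\widehat{u}^{\varepsilon}_{\xi}|\,d\xi=u_{-}-u_{+}$. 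Writing \eqref{ViscEq1} as $\varepsilon\widehat{u}^{\varepsilon}_{\xi\xi}=\bigl((\widehat{u}^{\varepsilon})^{k}-\xi\bigr)\widehat{u}^{\varepsilon}_{\xi}$ and using that $(\widehat{u}^{\varepsilon})^{k}-\xi>0$ on $(-\infty,\xi_{\sigma}^{\varepsilon})$ and $<0$ on $(\xi_{\sigma}^{\varepsilon},+\infty)$ together with $\widehat{u}^{\varepsilon}_{\xi}<0$, we see that $|\widehat{u}^{\varepsilon}_{\xi}|$ is nondecreasing on $(-\infty,\xi_{\sigma}^{\varepsilon}]$ and nonincreasing on $[\xi_{\sigma}^{\varepsilon},+\infty)$. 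Finally $\xi_{\sigma}^{\varepsilon}=(\widehat{u}^{\varepsilon}(\xi_{\sigma}^{\varepsilon}))^{k}$ lies between $u_{+}^{k}$ and $u_{-}^{k}$, so $\{\xi_{\sigma}^{\varepsilon}\}$ is bounded and, along a subsequence, $\xi_{\sigma}^{\varepsilon}\to\xi_{\sigma}$.

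\textbf{Decay of $\widehat{u}^{\varepsilon}_{\xi}$ and convergence of $\widehat{u}^{\varepsilon}$.} Fix $\eta>0$ and restrict to $\varepsilon$ so small that $|\xi_{\sigma}^{\varepsilon}-\xi_{\sigma}|<\eta/4$. On the fixed interval $J=[\xi_{\sigma}-\eta,\xi_{\sigma}-\tfrac{\eta}{2}]$, which lies to the left of $\xi_{\sigma}^{\varepsilon}$, monotonicity of $\widehat{u}^{\varepsilon}$ gives $(\widehat{u}^{\varepsilon}(s))^{k}\ge(\widehat{u}^{\varepsilon}(\xi_{\sigma}^{\varepsilon}))^{k}=\xi_{\sigma}^{\varepsilon}$, hence $(\widehat{u}^{\varepsilon}(s))^{k}-s\ge\xi_{\sigma}^{\varepsilon}-(\xi_{\sigma}-\tfrac{\eta}{2})\ge\eta/4$ on $J$. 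Combining the monotonicity of $|\widehat{u}^{\varepsilon}_{\xi}|$ with \eqref{Ig1} (taking $\mu=1$ and base point $\xi_{\sigma}-\tfrac{\eta}{2}$) gives, for all $\xi\le\xi_{\sigma}-\eta$,
\begin{equation*}
|\widehat{u}^{\varepsilon}_{\xi}(\xi)|\le\bigl|\widehat{u}^{\varepsilon}_{\xi}(\xi_{\sigma}-\eta)\bigr|=\Bigl|\widehat{u}^{\varepsilon}_{\xi}\bigl(\xi_{\sigma}-\tfrac{\eta}{2}\bigr)\Bigr|\exp\!\Bigl(-\tfrac{1}{\varepsilon}\!\int_{J}\!\bigl((\widehat{u}^{\varepsilon}(s))^{k}-s\bigr)\,ds\Bigr)\le\Bigl|\widehat{u}^{\varepsilon}_{\xi}\bigl(\xi_{\sigma}-\tfrac{\eta}{2}\bigr)\Bigr|\,e^{-\eta^{2}/(8\varepsilon)} .
\end{equation*}
Since $|\widehat{u}^{\varepsilon}_{\xi}|$ is nondecreasing on $[\xi_{\sigma}-\tfrac{\eta}{2},\xi_{\sigma}^{\varepsilon}]$, an interval of length $\ge\eta/4$, while $\int_{\re}|\widehat{u}^{\varepsilon}_{\xi}|=u_{-}-u_{+}$, the reference value satisfies $|\widehat{u}^{\varepsilon}_{\xi}(\xi_{\sigma}-\tfrac{\eta}{2})|\le 4(u_{-}-u_{+})/\eta$; therefore $\widehat{u}^{\varepsilon}_{\xi}\to0$ uniformly on $(-\infty,\xi_{\sigma}-\eta]$, and symmetrically on $[\xi_{\sigma}+\eta,+\infty)$. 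A further application of \eqref{Ig1} bounds $u_{-}-\widehat{u}^{\varepsilon}(\xi_{\sigma}-\eta)=\int_{-\infty}^{\xi_{\sigma}-\eta}|\widehat{u}^{\varepsilon}_{\xi}|$ (using $(\widehat{u}^{\varepsilon}(s))^{k}-s\ge\eta/2$ for $s\le\xi_{\sigma}-\eta$) by a multiple of $\varepsilon\,e^{-\eta^{2}/(8\varepsilon)}$, so by monotonicity $\widehat{u}^{\varepsilon}\to u_{-}$ uniformly on $(-\infty,\xi_{\sigma}-\eta]$ and $\widehat{u}^{\varepsilon}\to u_{+}$ uniformly on $[\xi_{\sigma}+\eta,+\infty)$. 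In particular $\widehat{u}^{\varepsilon}$ converges pointwise and boundedly to the step function $\widehat{u}$ equal to $u_{-}$ for $\xi<\xi_{\sigma}$ and to $u_{+}$ for $\xi>\xi_{\sigma}$.

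\textbf{Rankine–Hugoniot and the value of $\xi_{\sigma}$.} Testing \eqref{ViscEq1} against $\phi\in C_{0}^{\infty}(\re)$ and integrating by parts gives
\begin{equation*}
\int_{\re}\widehat{u}^{\varepsilon}(\phi+\xi\phi')\,d\xi-\frac{1}{k+1}\int_{\re}(\widehat{u}^{\varepsilon})^{k+1}\phi'\,d\xi=\varepsilon\int_{\re}\widehat{u}^{\varepsilon}\phi''\,d\xi .
\end{equation*}
As $\varepsilon\to0+$ along the subsequence, the right-hand side vanishes (bounded $\widehat{u}^{\varepsilon}$, compactly supported $\phi''$), and by dominated convergence the left-hand side converges to the same expression with the step function $\widehat{u}$ in place of $\widehat{u}^{\varepsilon}$. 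Splitting at $\xi_{\sigma}$ and integrating the $\phi'$-terms by parts (boundary contributions at $\pm\infty$ vanish), the $\phi$-terms cancel and one is left with $\bigl(\xi_{\sigma}(u_{-}-u_{+})-\tfrac{1}{k+1}(u_{-}^{k+1}-u_{+}^{k+1})\bigr)\phi(\xi_{\sigma})=0$ for every $\phi$. This is exactly $\xi_{\sigma}[u]-\tfrac{1}{k+1}[u^{k+1}]=0$, and the factorization $u_{-}^{k+1}-u_{+}^{k+1}=(u_{-}-u_{+})\sum_{j=0}^{k}u_{-}^{k-j}u_{+}^{j}$ turns it into $\xi_{\sigma}=\tfrac{1}{k+1}\sum_{j=0}^{k}u_{-}^{k-j}u_{+}^{j}$; since this value does not depend on the subsequence, the whole family $\xi_{\sigma}^{\varepsilon}$ converges to it.

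\textbf{Main obstacle.} The delicate point is the decay step: the convergence $\widehat{u}^{\varepsilon}_{\xi}\to0$ away from $\xi_{\sigma}$ must be \emph{uniform} in $\varepsilon$. This is obtained by extracting, from the $\varepsilon$-independent total–variation bound $\int_{\re}|\widehat{u}^{\varepsilon}_{\xi}|=u_{-}-u_{+}$ and the one–sided monotonicity of $|\widehat{u}^{\varepsilon}_{\xi}|$, a uniform bound at a fixed reference point just left of $\xi_{\sigma}^{\varepsilon}$, and then feeding it into the exponential factor of \eqref{Ig1}; making that exponent uniformly negative is precisely where $\xi_{\sigma}^{\varepsilon}\to\xi_{\sigma}$ is needed, which is why the subsequence must be fixed before the estimate is carried out.
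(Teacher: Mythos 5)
Your proposal is correct and reaches all the conclusions of the lemma, following the same overall skeleton as the paper (exponential decay of $\widehat{u}^\varepsilon_\xi$ away from $\xi_\sigma$ via the identity \eqref{Ig1}, uniform convergence to the constant states by integrating that bound, and identification of $\xi_\sigma$ by passing to the limit in the weak formulation \eqref{weAprx1} with test functions), but the crucial intermediate estimate is obtained by a genuinely different device. The paper controls the derivative at the reference point $\xi_3=\xi_\sigma-\eta/2$ by integrating the ODE twice on $[\xi,\xi_3]$, letting $\xi\to-\infty$ and evaluating a Gaussian-type integral, which gives $|\widehat{u}'(\xi_3)|\le (u_--u_+)/(\sqrt{\varepsilon}A_3)$, a bound that blows up like $\varepsilon^{-1/2}$ and is afterwards absorbed by the exponential factor. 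You instead read off from $\varepsilon\widehat{u}_{\xi\xi}=\bigl((\widehat{u})^k-\xi\bigr)\widehat{u}_\xi$ that $|\widehat{u}_\xi|$ is unimodal with its maximum at $\xi_\sigma^\varepsilon$, and combine this with the total-variation identity $\int_{\re}|\widehat{u}^\varepsilon_\xi|\,d\xi=u_--u_+$ to get the $\varepsilon$-independent bound $4(u_--u_+)/\eta$ at the reference point; this is cleaner and avoids the $\varepsilon^{-1/2}$ loss, at the price of using the sign of $(\widehat{u})^k-\xi$ on either side of $\xi_\sigma^\varepsilon$, which (exactly like the paper's own step $(k(\widehat{u}(\theta))^{k-1}u'(\theta)-1)(s-\xi_\sigma^\varepsilon)\ge\eta/4$ and its assertion that $\xi_\sigma^\varepsilon$ is unique) implicitly relies on $\theta\mapsto\theta^k$ being increasing on the range of $\widehat{u}$, so no additional hypothesis beyond what the paper already uses. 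Your convergence step for $\widehat{u}^\varepsilon\to u_-$ integrates the exponential bound over $(-\infty,\xi_\sigma-\eta]$ rather than the paper's one-line estimate $|\widehat{u}(\xi_4)-u_-|\le \tfrac{\varepsilon}{A_4}|\widehat{u}'(\xi_4)|$, and the Rankine--Hugoniot computation is the same; your closing remark that the limiting value is subsequence-independent, so the whole family $\xi_\sigma^\varepsilon$ converges, is a small strengthening the paper does not state.
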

\begin{proof} To simplify the notation in this proof, we shall use $\widehat{v}$, $\widehat{u}$ instead of $\widehat{v}^\varepsilon$, $\widehat{u}^\varepsilon$.

 Take $\xi_3=\xi_\sigma-\eta/2$, and let $\varepsilon$ be so small such that $\xi_\sigma^\varepsilon >\xi_3+\eta/4$. 

Now, integrating the first equation of \eqref{sysVis2} twice on $[\xi,\xi_3]$, we get
 \begin{align*}
  \widehat{u}(\xi_3)-\widehat{u}(\xi)&=\widehat{u}'(\xi_3) \int_{\xi}^{\xi_3} \exp \left( -\int_r^{\xi_3} \frac{(\widehat{u}(s))^k-s}{\varepsilon}ds \right) dr
  \le \widehat{u}'(\xi_3) \int_\xi^{\xi_3} \exp \left( - \int_r^{\xi_3} \frac{u_-^k-s}{\varepsilon} ds \right) dr\\
  &= \widehat{u}'(\xi_3) \int_\xi^{\xi_3} \exp \left( \frac{1}{\varepsilon} \left( \left(u_-^k-\xi_3 \right)(r-\xi_3)-\frac12(r-\xi_3)^2 \right) \right) dr\\
  &= \widehat{u}'(\xi_3) \int_{\xi-\xi_3}^0 \exp \left( \frac{1}{\varepsilon} \left( \left(u_-^k-\xi_3 \right)r-\frac12 r^2 \right) \right) dr.
 \end{align*}
Letting $\xi \to -\infty$, we get
\begin{align*}
 u_+-u_- &\le \widehat{u}'(\xi_3) \int_{-\infty}^0 \exp \left( \frac{1}{\varepsilon} \left( \left(u_-^k-\xi_3 \right)r-\frac12 r^2 \right) \right) dr\\
 &\le \widehat{u}'(\xi_3) \int_{0}^{2\varepsilon} \exp \left( -\frac{1}{\varepsilon} \left( \left(u_-^k-\xi_3 \right)r+\frac12 r^2 \right) \right) dr\\
 &\le \widehat{u}'(\xi_3) \sqrt{\varepsilon}A_3
\end{align*}
for $0\le \varepsilon \le 1$, where $A_3$ is a constant independent of $\varepsilon$. Thus
\begin{equation*}
 |\widehat{u}'(\xi_3)| \le \frac{u_--u_+}{\sqrt{\varepsilon}A_3}.
\end{equation*}
So
\begin{equation} \label{eqC}
 |\widehat{u}'(\xi)| \le \frac{u_--u_+}{\sqrt{\varepsilon}A_3}\exp \left( -\int_\xi^{\xi_3} \frac{(\widehat{u}(s))^k-s}{\varepsilon}ds \right).
\end{equation}
Noticing that 
\begin{align*}
(\widehat{u}(s))^k-s &=((\widehat{u}(s))^k-(\widehat{u}(\xi_\sigma^\varepsilon))^k)-(s-\xi_\sigma^\varepsilon) 
= (k (\widehat{u}(\theta))^{k-1}u'(\theta)-1)(s-\xi_\sigma^\varepsilon) \ge \frac{\eta}{4}
\end{align*}
for $s \le \xi_3$ and from \eqref{eqC} we have
\begin{equation*}
 |\widehat{u}'(\xi)| \le \frac{u_--u_+}{\sqrt{\varepsilon}A_3}\exp \left( -\frac{\eta}{4\varepsilon} \left( \xi_3-\xi \right) \right)
\end{equation*}
which implies that
$$ \lim_{\varepsilon \to 0+} \widehat{u}_\xi^\varepsilon(\xi)=0, \qquad \mbox{uniformly for }\xi \le \xi_\sigma-\eta. $$
Now, we choose $\xi$ and $\xi_4$ such that $\xi<\xi_4 \le \xi_\sigma-\eta$.
From
$$ \widehat{u}(\xi_4)-\widehat{u}(\xi)=\widehat{u}'(\xi_4)\int_\xi^{\xi_4} \exp \left(- \int_r^{\xi_4}
\frac{(\widehat{u}(s))^k-s}{\varepsilon} ds \right) dr, $$
%$s<\xi_4$ then $u^k(s)>u^k(\xi_4)$ and $-s>-\xi_4$ so, $(k+1)u^k(s)-s>(k+1)u^k(\xi_4)-\xi_4$
we get
\begin{align*}
 | \widehat{u}(\xi_4)-\widehat{u}(\xi)| &\le |\widehat{u}'(\xi_4)| \int_\xi^{\xi_4} \exp \left(
\frac{A_4}{\varepsilon}(r-\xi_4) \right)dr \le \frac{\varepsilon}{A_4}|\widehat{u}'(\xi_4)| \left(1-\exp \left(
\frac{A_4}{\varepsilon}(\xi-\xi_4) \right) \right),
\end{align*}
where $A_4=(\widehat{u}(\xi_4))^k - \xi_4$.
When $\xi \to-\infty$, we obtain
\begin{equation*}
 |\widehat{u}(\xi_4)-u_-| \le \frac{\varepsilon}{A_4}|\widehat{u}'(\xi_4)|,
\end{equation*}
which implies that
\begin{equation*}
 \lim_{\varepsilon \to 0+} \widehat{u}^\varepsilon(\xi)=u_-, \qquad \mbox{uniformly
for } \xi \le \xi_\sigma-\eta.
\end{equation*}
The results for $\xi \ge \xi_\sigma+\eta$ can be obtained analogously.\\
%\begin{claim}
%$\xi_\sigma = \frac{1}{k+1}\sum\limits_{j=0}^k u_-^{k-j}u_+^j$ and $\xi_\sigma[u]-\frac{1}{k+1}[u^k]=0$.
%\end{claim}
In fact, let $\phi \in C_0^\infty((\xi_1,\xi_2))$ where $\xi_1 <\xi_\sigma < \xi_2$, From \eqref{ViscEq1} we have
\begin{equation} \label{weAprx1}
\int_{\xi_1}^{\xi_2} \widehat{u}(\xi) \left( (\xi \phi(\xi))' -\frac{1}{k+1}\widehat{u}^{k}(\xi)\phi'(\xi) \right) d\xi =\varepsilon \int_{\xi_1}^{\xi_2} \widehat{u}(\xi)\phi''(\xi) d\xi.
\end{equation}
Passing limit $\varepsilon \to 0+$ in \eqref{weAprx1}, we get
\begin{equation*}
\int_{\xi_1}^{\xi_\sigma} u_- \left( (\xi \phi(\xi))' -\frac{1}{k+1}u_-^{k} \phi'(\xi) \right) d\xi+\int_{\xi_\sigma}^{\xi_2} u_+ \left( (\xi \phi(\xi))' -\frac{1}{k+1}u_+^{k}\phi'(\xi) \right) d\xi =0.
\end{equation*}
or 
\begin{equation*}
u_-  \xi_\sigma \phi(\xi_\sigma) -\frac{1}{k+1}u_-^{k+1} \phi(\xi_\sigma)- u_+ \xi_\sigma \phi(\xi_\sigma) +\frac{1}{k+1}u_+^{k+1}\phi(\xi_\sigma) =0
\end{equation*}
which yields $\xi_\sigma = \frac{1}{k+1}\sum\limits_{j=0}^k u_-^{k-j}u_+^j$ for arbitrary $\phi$.
\end{proof}
\begin{lemma} \label{lemmaB}
%Suppose $u_->u_+$. 
For any $\eta>0$,
 \begin{equation*}
  \lim_{\varepsilon \to 0+}\widehat{v}^\varepsilon(\xi)=\begin{cases}
                             v_-, &\mbox{if } \xi<\xi_\sigma-\eta,\\
%                             v_+ \left( \frac{\sum_{j=0}^{k-1}u_-^{k-j}u_+^j}{\sum_{j=1}^{k}u_-^{k-j}u_+^j} \right), &\mbox{if } \xi_\sigma + \eta < \xi < \xi_\beta - \eta, \\
                             v_+, &\mbox{if } \xi>\xi_\sigma+\eta,
                            \end{cases}
 \end{equation*}
uniformly, with respect to $\xi$.
\end{lemma}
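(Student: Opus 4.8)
The plan is to read the limit directly off the explicit representations \eqref{solRho1}--\eqref{solRho2}, by showing that the exponents occurring there tend to $0$ uniformly as $\varepsilon \to 0+$; the conclusion $\widehat v^\varepsilon \to v_\pm$ then follows from continuity of $\exp$ at $0$, since $\widehat v_1^\varepsilon = v_- e^{(\cdot)}$ and $\widehat v_2^\varepsilon = v_+ e^{(\cdot)}$. Throughout I would suppress the superscript and write $\widehat v,\widehat u$ for $\widehat v^\varepsilon,\widehat u^\varepsilon$. Fix $\eta>0$. First, since $\xi_\sigma^\varepsilon\to\xi_\sigma$ by Lemma \ref{lemmaA}, pick $\varepsilon_0>0$ with $|\xi_\sigma^\varepsilon-\xi_\sigma|<\eta/4$ for $0<\varepsilon<\varepsilon_0$; for such $\varepsilon$ one has $\xi<\xi_\sigma^\varepsilon$ whenever $\xi\le\xi_\sigma-\eta$, so there $\widehat v(\xi)=\widehat v_1(\xi)$, and likewise $\widehat v(\xi)=\widehat v_2(\xi)$ for $\xi\ge\xi_\sigma+\eta$. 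Thus it suffices to establish
\begin{equation*}
\sup_{\xi\le\xi_\sigma-\eta}\left|\int_{-\infty}^{\xi}\frac{(\widehat u^{k}(s))'}{\widehat u^{k}(s)-s}\,ds\right|\to 0\qquad\text{and}\qquad \sup_{\xi\ge\xi_\sigma+\eta}\left|\int_{\xi}^{+\infty}\frac{(\widehat u^{k}(s))'}{\widehat u^{k}(s)-s}\,ds\right|\to 0
\end{equation*}
as $\varepsilon\to0+$.

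For the first limit I would bound the integrand on $(-\infty,\xi_\sigma-\eta/2]$. Repeating the computation in the proof of Lemma \ref{lemmaA} with $\xi_3=\xi_\sigma-\eta/2$ and using $|\xi_\sigma^\varepsilon-\xi_\sigma|<\eta/4$, one gets $\widehat u^{k}(s)-s\ge\eta/4$ for every $s\le\xi_\sigma-\eta/2$ and every $\varepsilon\in(0,\varepsilon_0)$, so the denominator is bounded below uniformly in $\varepsilon$. Since $|\widehat u|\le M:=\max\{u_-,u_+\}$ (the a priori bound used when proving existence), the numerator satisfies $|(\widehat u^{k}(s))'|\le kM^{k-1}|\widehat u'(s)|$, and $\widehat u'$ is integrable near $-\infty$ because $\widehat u$ is monotone and bounded. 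Hence, for $\xi\le\xi_\sigma-\eta$,
\begin{equation*}
\left|\int_{-\infty}^{\xi}\frac{(\widehat u^{k}(s))'}{\widehat u^{k}(s)-s}\,ds\right|\le\frac{4kM^{k-1}}{\eta}\int_{-\infty}^{\xi}|\widehat u'(s)|\,ds=\frac{4kM^{k-1}}{\eta}\bigl(u_--\widehat u(\xi)\bigr),
\end{equation*}
the last equality because $\widehat u$ is strictly decreasing with $\widehat u(-\infty)=u_-$. By Lemma \ref{lemmaA}, $\widehat u(\xi)\to u_-$ uniformly on $\xi\le\xi_\sigma-\eta$, so the right-hand side tends to $0$ uniformly. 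The second limit is treated identically: on $[\xi_\sigma+\eta/2,+\infty)$ one has $s-\widehat u^{k}(s)\ge\eta/4$, and $\int_{\xi}^{+\infty}|\widehat u'(s)|\,ds=\widehat u(\xi)-u_+\to0$ uniformly for $\xi\ge\xi_\sigma+\eta$, again by Lemma \ref{lemmaA}.

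The one step I expect to be delicate is securing the lower bound $|\widehat u^{k}(s)-s|\ge\eta/4$ \emph{uniformly in $\varepsilon$} over the whole half-lines rather than just pointwise: this is exactly where Lemma \ref{lemmaA} is indispensable, since it pins $\xi_\sigma^\varepsilon$ inside a fixed small neighbourhood of $\xi_\sigma$, and one then exploits the monotonicity of $\widehat u$ and of $s\mapsto s$ (via the identity $\widehat u^{k}(s)-s=(k\widehat u^{k-1}(\theta)\widehat u'(\theta)-1)(s-\xi_\sigma^\varepsilon)$ for $\theta$ between $s$ and $\xi_\sigma^\varepsilon$, as in the proof of Lemma \ref{lemmaA}) to propagate the bound to all of $(-\infty,\xi_\sigma-\eta/2]$, respectively $[\xi_\sigma+\eta/2,+\infty)$. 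Everything else — the sup-norm bound on $\widehat u$, the telescoping $\int|\widehat u'|=|\,\widehat u(\xi)-u_\pm|$, and continuity of $\exp$ — is routine, and no further compactness beyond the subsequence already fixed in Lemma \ref{lemmaA} is needed.
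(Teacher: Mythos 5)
Your proposal is correct and follows essentially the same route as the paper: both use Lemma \ref{lemmaA} to confine $\xi_\sigma^\varepsilon$ to a small neighbourhood of $\xi_\sigma$, derive a uniform lower bound of order $\eta$ on $|\widehat u^{k}(s)-s|$ over the relevant half-line, and then telescope the monotone derivative so that the exponent in \eqref{solRho1}--\eqref{solRho2} is controlled by $|\widehat u^\varepsilon(\xi)-u_\pm|$, which vanishes uniformly by Lemma \ref{lemmaA}. The only cosmetic difference is that the paper telescopes $((\widehat u^\varepsilon)^k)'$ directly (bounding the exponent by $\tfrac{2}{\eta}\bigl((\widehat u^\varepsilon(\xi))^k-u_+^k\bigr)$) instead of factoring out $kM^{k-1}$ and telescoping $|\widehat u'|$.
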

\begin{proof}
Take $\varepsilon_0>0$ so small such that $|\xi_\sigma^\varepsilon-\xi_\sigma| < \frac{\eta}{2}$ whenever $0<\varepsilon<\varepsilon_0$. For any
$\xi \ge \xi_\sigma+\eta$ and $\varepsilon <\varepsilon_0$, we have $$ \xi >\xi_\sigma^\varepsilon+\frac{\eta}{2} $$
and
\begin{equation*} \label{Intv}
\widehat{v}^\varepsilon(\xi)=v_+ \exp \left( \int_{\xi}^\infty \frac{((\widehat{u}^\varepsilon(s))^k)'}{(\widehat{u}^\varepsilon(s))^k-s}ds \right).
\end{equation*}
For any $s\in [\xi,+\infty)$, we have
\begin{align*}
 (\widehat{u}^\varepsilon(s))^k-s &< (\widehat{u}^\varepsilon(\xi))^k-\xi=(1-((\widehat{u}^\varepsilon(\zeta))^k)')(\xi_\sigma^\varepsilon-\xi)
\le -\frac{\eta}{2}.
\end{align*}
As $\widehat{u}$ is decreasing, we have that $((\widehat{u}(\xi))^k)'=k(\widehat{u}(\xi))^{k-1}\widehat{u}'(\xi)<0$, and
$$ \frac{((\widehat{u}(s))^k)'}{(\widehat{u}^\varepsilon(s))^k-s} <-\frac{2}{\eta}((\widehat{u}(s))^k)' , \qquad \mbox{for any } s \in [\xi,+\infty). $$
Now, in the last inequality, integrating on $[\xi,+\infty)$ we have
$$ 0\le \int_{\xi}^\infty \frac{((\widehat{u}^\varepsilon(s))^k)'}{(\widehat{u}^\varepsilon(s))^k-s} ds \le -\frac{2}{\eta}\int_{\xi}^\infty((u^\varepsilon(s))^k)' ds= -\frac{2}{\eta}(u_+^k-(\widehat{u}^\varepsilon(\xi))^k), $$
so
\begin{equation} \label{inq1}
1 \le \exp \left( \int_{\xi}^\infty \frac{((\widehat{u}^\varepsilon(s))^k)'}{(\widehat{u}^\varepsilon(s))^k-s} ds \right) \le \exp \left( -\frac{2}{\eta}(u_+^k-(\widehat{u}^\varepsilon(\xi))^k) \right).
\end{equation}
By Lemma~\ref{lemmaA} we have that $ \lim\limits_{\varepsilon \to 0+} \widehat{u}^\varepsilon(\xi)=u_+$, and from \eqref{inq1} we have 
$$ \lim_{\varepsilon \to 0+} \exp \left( \int_{\xi}^\infty \frac{((\widehat{u}^\varepsilon(s))^k)'}{(\widehat{u}^\varepsilon(s))^k-s} ds \right)=1 $$ and
$$ \lim_{\varepsilon \to 0+} \widehat{v}^\varepsilon(\xi)= \lim_{\varepsilon \to 0+} v_+ \exp \left( \int_{\xi}^\infty \frac{((\widehat{u}^\varepsilon(s))^k)'}{(\widehat{u}^\varepsilon(s))^k-s} ds \right)=v_+, \quad \mbox{uniformly for } \xi>\xi_\sigma+\eta. $$
Similarly, we obtain also $\lim\limits_{\varepsilon\to0}\widehat{v}^\varepsilon(\xi)=v_{-}$, uniformly for $\xi<\xi_\sigma-\eta$.
\end{proof}
\begin{lemma}
%Suppose $u_->u_+$. 
Let $(\widehat{u}^\varepsilon, \widehat{v}^\varepsilon)$ be the solution of the Riemann problem \eqref{sysVis2}-\eqref{datsysVisc2}
 Denote
\begin{equation*} \label{limSigma}
 \sigma=\xi_\sigma=\lim_{\varepsilon \to 0+}
\xi_\sigma^\varepsilon=\lim_{\varepsilon \to 0+}
(\widehat{u}^\varepsilon(\xi_\sigma^\varepsilon))^k=(\widehat{u}(\sigma))^k.
\end{equation*}
  Then
  \begin{equation*}
   \lim_{\varepsilon \to 0+} (\widehat{v}^\varepsilon(\xi), \widehat{u}^\varepsilon(\xi))=\begin{cases}
                                                                          (v_-,u_-), &\mbox{if }\xi <\sigma,\\
                                                                          (w_0\cdot \delta, \sigma), &\mbox{if }\xi =\sigma,\\
                                                                          (v_+,u_+), &\mbox{if }\xi >\sigma,
                                                                         \end{cases}
  \end{equation*}
where $\widehat{v}^\varepsilon(\xi)$ converges in the sense of the distributions to the sum of a step function and a Dirac measure $\delta$ with weight $w_0=-\sigma(v_--v_+)+(v_-u_-^k-v_+u_+^k)$. Moreover, $\sigma = \frac{1}{k+1}\sum\limits_{j=0}^k u_-^{k-j}u_+^j$.
 \end{lemma}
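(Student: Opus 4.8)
The plan is to combine the convergence already established in Lemmas~\ref{lemmaA} and~\ref{lemmaB} with a uniform mass estimate for $\widehat v^\varepsilon$ near the singular point $\xi_\sigma^\varepsilon$, the latter obtained from the conservative form of the second equation in \eqref{sysVis2}.

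First I will dispose of the $u$--component. Each $\widehat u^\varepsilon$ is monotone and bounded by $\max\{u_-,u_+\}$ (Lemma~\ref{Lemma2.1}), and Lemma~\ref{lemmaA} gives $\widehat u^\varepsilon(\xi)\to u_-$ for $\xi<\sigma$, $\widehat u^\varepsilon(\xi)\to u_+$ for $\xi>\sigma$, uniformly away from $\sigma$, together with $(\widehat u^\varepsilon(\xi_\sigma^\varepsilon))^k=\xi_\sigma^\varepsilon\to\sigma$ and the identity $\sigma=\frac1{k+1}\sum_{j=0}^k u_-^{k-j}u_+^j$. By dominated convergence this yields $\widehat u^\varepsilon\to u_-$ on $\{\xi<\sigma\}$ and $\widehat u^\varepsilon\to u_+$ on $\{\xi>\sigma\}$ in $L^1_{\mathrm{loc}}(\mathbb R)$ and in the sense of distributions, which is the first and third lines of the claimed limit.

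The heart of the argument will be the behaviour of $\widehat v^\varepsilon$ near $\xi_\sigma^\varepsilon$. Rewriting \eqref{eq4} in conservative form one has $\big(((\widehat u^\varepsilon(\xi))^k-\xi)\widehat v^\varepsilon\big)_\xi=-\widehat v^\varepsilon$; by the Lemma of Section~2 describing $\widehat v^\varepsilon$ in the case $u_->u_+$, $\widehat v^\varepsilon\in L^1_{\mathrm{loc}}$, it is a weak solution of \eqref{eq1} across $\xi_\sigma^\varepsilon$, and $((\widehat u^\varepsilon(\xi))^k-\xi)\widehat v^\varepsilon\to0$ as $\xi\to\xi_\sigma^\varepsilon\pm$ (this is \eqref{eqEst1} and its right-hand analogue). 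Fixing $\eta>0$ small and $\varepsilon$ small enough that $\xi_\sigma^\varepsilon\in(\sigma-\eta,\sigma+\eta)$, I will integrate the conservative identity over $(\sigma-\eta,\sigma+\eta)$; the interior boundary terms at $\xi_\sigma^\varepsilon$ drop out and
$$\int_{\sigma-\eta}^{\sigma+\eta}\widehat v^\varepsilon\,d\xi=\big((\widehat u^\varepsilon(\sigma-\eta))^k-(\sigma-\eta)\big)\widehat v^\varepsilon(\sigma-\eta)-\big((\widehat u^\varepsilon(\sigma+\eta))^k-(\sigma+\eta)\big)\widehat v^\varepsilon(\sigma+\eta).$$
Sending $\varepsilon\to0+$ (Lemmas~\ref{lemmaA} and~\ref{lemmaB}) and then $\eta\to0+$, the right-hand side converges to $(u_-^k-\sigma)v_--(u_+^k-\sigma)v_+=-\sigma(v_--v_+)+(v_-u_-^k-v_+u_+^k)=w_0$. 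Since $v_\pm>0$ forces $\widehat v^\varepsilon\ge0$, this is also a uniform bound on $\int_{\sigma-\eta}^{\sigma+\eta}|\widehat v^\varepsilon|$ for small $\varepsilon$ --- the concentration estimate we need.

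Finally, given $\phi\in C_0^\infty(\mathbb R)$, I will split $\int_{\mathbb R}\widehat v^\varepsilon\phi\,d\xi$ over $(-\infty,\sigma-\eta)$, $(\sigma-\eta,\sigma+\eta)$ and $(\sigma+\eta,\infty)$. On the outer intervals $\widehat v^\varepsilon$ is smooth and, by Lemma~\ref{lemmaB}, converges uniformly to $v_-$, resp.\ $v_+$, so those pieces tend to $\int_{-\infty}^{\sigma-\eta}v_-\phi$ and $\int_{\sigma+\eta}^{\infty}v_+\phi$. On the middle interval, writing $\phi=\phi(\sigma)+(\phi-\phi(\sigma))$, the first term gives $\phi(\sigma)\int_{\sigma-\eta}^{\sigma+\eta}\widehat v^\varepsilon$, handled by the previous paragraph, while the remainder is bounded by $\|\phi'\|_\infty\,\eta\int_{\sigma-\eta}^{\sigma+\eta}\widehat v^\varepsilon$, hence $O(\eta)$ uniformly in small $\varepsilon$. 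Letting $\varepsilon\to0+$ and then $\eta\to0+$ gives
$$\lim_{\varepsilon\to0+}\int_{\mathbb R}\widehat v^\varepsilon(\xi)\phi(\xi)\,d\xi=\int_{-\infty}^{\sigma}v_-\phi(\xi)\,d\xi+\int_{\sigma}^{\infty}v_+\phi(\xi)\,d\xi+w_0\,\phi(\sigma),$$
which says exactly that $\widehat v^\varepsilon$ converges in the sense of distributions to the sum of the step function equal to $v_-$ on $\{\xi<\sigma\}$ and $v_+$ on $\{\xi>\sigma\}$, together with the Dirac measure $w_0\delta_\sigma$. The main obstacle is precisely this double limit: one must show the interior boundary terms at the moving point $\xi_\sigma^\varepsilon$ vanish and that $\int_{\sigma-\eta}^{\sigma+\eta}\widehat v^\varepsilon$ stays bounded as $\varepsilon\to0$ before $\eta$ is sent to zero. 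The conservative-form identity makes both of these transparent, and the nonnegativity of $\widehat v^\varepsilon$ turns the mass bound into an $L^1$ bound for free.
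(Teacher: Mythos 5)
Your argument is correct, and it reaches the same limit (step function plus $w_0\delta_\sigma$ with $\sigma=\frac{1}{k+1}\sum_{j=0}^k u_-^{k-j}u_+^j$) by a genuinely different mechanism than the paper. The paper works with the weak formulation \eqref{eqL1} tested against \emph{sloping} test functions (functions constant near $\sigma$), passes to the limit on the outer intervals using Lemmas \ref{lemmaA} and \ref{lemmaB}, sends $\alpha_1\to\sigma-$, $\alpha_2\to\sigma+$ to produce the jump term $\left(-[\widehat v]\sigma+[\widehat v\,\widehat u^k]\right)\phi(\sigma)$, and then approximates an arbitrary test function by a sloping one, invoking a uniform-in-$\varepsilon$ $L^1$ bound on $\widehat v^\varepsilon$ that is asserted rather than derived. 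You instead integrate the conservative form $\bigl(((\widehat u^\varepsilon)^k-\xi)\widehat v^\varepsilon\bigr)_\xi=-\widehat v^\varepsilon$ of \eqref{eq4} across the singular point, using the vanishing flux limits \eqref{eqEst1} (and its right-hand analogue) to drop the interior boundary terms, which gives the exact mass identity on $(\sigma-\eta,\sigma+\eta)$; combined with $\widehat v^\varepsilon>0$ (immediate from \eqref{solRho1}--\eqref{solRho2} and $v_\pm>0$) this yields both the concentrated mass $w_0$ and the uniform local $L^1$ bound in one stroke, and your splitting $\phi=\phi(\sigma)+(\phi-\phi(\sigma))$ on the middle interval plays the role of the paper's sloping test functions, with the $\limsup/\liminf$ sandwich in $\eta$ closing the double limit. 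What your route buys is that the uniform $L^1$ control near $\sigma$ is proved explicitly rather than assumed, and the weight $w_0$ appears directly as a limiting flux difference; what the paper's route buys is that it does not use positivity of $\widehat v^\varepsilon$ and transfers verbatim to the more general sign-changing setting of \cite{TZZ}. Your treatment of the $\widehat u^\varepsilon$-component (monotonicity, uniform bound, dominated convergence to the step function, $\sigma$ and the Rankine--Hugoniot relation from Lemma \ref{lemmaA}) matches the paper's.
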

 \begin{proof}
 From Lemma \ref{lemmaA} we have that $\sigma=\xi_\sigma=\frac{1}{k+1} \sum\limits_{j=0}^k u_-^{k-j}u_+^j$ and 
 $-\sigma (u_--u_+)+\frac{1}{k+1}(u_-^{k+1}-u_+^{k+1})=0.$
 Moreover, observe that $\psi_1(\theta)=\theta^k-\frac{1}{k+1}\frac{\theta^{k+1}-u_+^k}{\theta-u_+}>0$ for all $\theta > u_+$ and $\psi_2(\theta)=\frac{1}{k+1}\frac{u_-^k-\theta^{k+1}}{u_--\theta}-\theta^k>0$ for all $\theta < u_-$.
Then, as $\varepsilon \to 0+$, we have $u_+^k<\sigma <u_-^k$.
%\begin{equation*} \label{entropyCond}
%u_+^k<\sigma <u_-^k.
%\end{equation*}
Now, we need to study the limit behavior of $\widehat{v}^\varepsilon$ in the neighborhood of $\sigma$. %=\xi_\sigma=\frac{1}{k+1} \sum\limits_{j=0}^k u_-^{k-j}u_+^j$ %=(\widehat{u}(\sigma))^k$, 
Let $\xi_1$ and $\xi_2$ be real numbers such that $\xi_1<\sigma<\xi_2$ and $\phi\in C_0^\infty([\xi_1,\xi_2])$ such that
$\phi(\xi)\equiv \phi(\sigma)$ for $\xi$ in a
neighborhood $\Omega$ of $\sigma$, $\Omega \subset (\xi_1,\xi_2)$ \footnote{The function $\phi$ is called a {\em sloping test function} \cite{TZZ}}. Then $\xi_\sigma^\varepsilon \in \Omega$ whenever $0<\varepsilon<\varepsilon_0$. 
From \eqref{sysVis2} we have
\begin{equation} \label{eqL1}
-\int_{\xi_1}^{\xi_2} \widehat{v}^\varepsilon((\widehat{u}^\varepsilon)^k-\xi)\phi' d\xi+\int_{\xi_1}^{\xi_2} \widehat{v}^\varepsilon \phi d\xi=0.
\end{equation}
For $\alpha_1, \alpha_2 \in \Omega$, $\alpha_1,\alpha_2$ near $\sigma$ such that $\alpha_1 <\sigma <\alpha_2$, we write
\begin{equation*}
 \int_{\xi_1}^{\xi_2} \widehat{v}^\varepsilon((\widehat{u}^\varepsilon)^k-\xi)\phi' d\xi=\int_{\xi_1}^{\alpha_1} \widehat{v}^\varepsilon((\widehat{u}^\varepsilon)^k-\xi)\phi' d\xi+\int_{\alpha_2}^{\xi_2} \widehat{v}^\varepsilon((\widehat{u}^\varepsilon)^k-\xi)\phi' d\xi,
\end{equation*}
and from Lemmas~\ref{lemmaA} and \ref{lemmaB}, we obtain
\begin{align*}
 \lim_{\varepsilon \to 0+} \int_{\xi_1}^{\xi_2} \widehat{v}^\varepsilon((\widehat{u}^\varepsilon)^k-\xi)\phi' d\xi&=
 \int_{\xi_1}^{\alpha_1} v_-(u_-^k-\xi)\phi' d\xi+\int_{\alpha_2}^{\xi_2} \widehat{v}_+(u_+^k-\xi)\phi' d\xi\\
 &= \left( v_-u_-^k-v_+u_+^k-v_-\alpha_1+v_+\alpha_2 \right) \phi(\sigma)\\
 &+\int_{\xi_1}^{\alpha_1}v_-\phi(\xi)d\xi
 +\int_{\alpha_2}^{\xi_2}v_+\phi(\xi)d\xi
\end{align*}
Then taking $\alpha_1 \to \sigma-$, $\alpha_2 \to \sigma+$, we arrive at
\begin{equation} \label{eqL2}
 \lim_{\varepsilon \to 0+} \int_{\xi_1}^{\xi_2} \widehat{v}^\varepsilon((\widehat{u}^\varepsilon)^k-\xi)\phi' d\xi=
 \left( -[\widehat{v}]\sigma+[\widehat{v} \widehat{u}^k] \right) \phi(\sigma)+\int_{\xi_1}^{\xi_2} J(\xi-\sigma)\phi(\xi)d\xi
\end{equation}
where $[q]=q_--q_+$ and $$J(x)=\begin{cases}
                               v_-, &\mbox{if }x<0,\\
                               v_+, &\mbox{if }x>0.
                              \end{cases}$$
From \eqref{eqL1} and \eqref{eqL2}, we get
\begin{equation*}
 \lim_{\varepsilon \to 0+} \int_{\xi_1}^{\xi_2} (\widehat{v}^\varepsilon-J(\xi-\sigma))\phi(\xi)d\xi=\left( -[\widehat{v}]\sigma+[\widehat{v} \widehat{u}^k] \right) \phi(\sigma).
\end{equation*}
for all sloping test functions $\phi \in C_0^\infty ([\xi_1,\xi_2])$.\\
For an arbitrary $\psi \in C_0^\infty([\xi_1,\xi_2])$, we take a sloping test function $\phi$, such that $\phi(\sigma)=\psi(\sigma)$ and $$ \max_{[\xi_1,\xi_2]}|\psi-\phi|<\mu, $$ for a sufficiently small $\mu>0$. As $\widehat{v}^\varepsilon \in L^1([\xi_1,\xi_2)$ uniformly, we obtain
\begin{align*}
 \lim_{\varepsilon \to 0+} \int_{\xi_1}^{\xi_2} (\widehat{v}^\varepsilon-J(\xi-\sigma))\psi(\xi)d\xi &= 
 \lim_{\varepsilon \to 0+} \int_{\xi_1}^{\xi_2} (\widehat{v}^\varepsilon-J(\xi-\sigma))\phi(\xi)d\xi+O(\mu)\\
 &=\left( -[\widehat{v}]\sigma+[\widehat{v} \widehat{u}^k] \right) \phi(\sigma)+O(\mu)\\
 &=\left( -[\widehat{v}]\sigma+[\widehat{v} \widehat{u}^k] \right) \psi(\sigma)+O(\mu).
\end{align*}
Then, when $\mu \to 0+$, we find that 
\begin{equation*} \label{deltaStr}
 \lim_{\varepsilon \to 0+} \int_{\xi_1}^{\xi_2} (\widehat{v}^\varepsilon-J(\xi-\sigma))\psi(\xi)d\xi=\left( -[\widehat{v}]\sigma+[\widehat{v} \widehat{u}^k] \right) \psi(\sigma)
\end{equation*}
holds for all test functions $\psi \in C_0^\infty ([\xi_1,\xi_2])$. Thus, $\widehat{v}^\varepsilon$ converges in the sense of the distributions to the sum of a step function and a Dirac delta function with strength $-[\widehat{v}]\sigma+[\widehat{v} \widehat{u}^k]$.
In similar way, we can show that
$$ \lim_{\varepsilon \to 0+} \int_{\xi_1}^{\xi_2} (\widehat{u}^\varepsilon - \widetilde{J}(\xi -\sigma)) \psi(\xi) d\xi =0 $$
for all test functions $\psi \in C_0^\infty ([\xi_1,\xi_2])$ and where 
$\widetilde{J}(x) =
\begin{cases}
u_-, &\mbox{if } x<0, \\
u_+, &\mbox{if } x>0.
\end{cases}$. \\
Thus,
 $\widehat{u}^\varepsilon$ converges in the sense of the distributions to a step function.
%Under the entropy condition \eqref{entropyCond} the system \eqref{GRHsys2} admits a unique solution $(\sigma, w_0,u_\delta)$.
\end{proof}
Then we get the following theorem.
\begin{theorem} \label{ThmFinal}
 Suppose $u_->u_+$. Let $(\widehat{v}^\varepsilon(x,t),\widehat{u}^\varepsilon(x,t))$ be the similarity solution of \eqref{sysVis1}--\eqref{datsysVisc1}. Then the limit $$ \lim_{\varepsilon \to 0+}(\widehat{v}^\varepsilon(x,t),\widehat{u}^\varepsilon(x,t))=(\widehat{v}(x,t),\widehat{u}(x,t)) $$
 exists in the measure sense and $(\widehat{v},\widehat{u})$ solves \eqref{sys1}--\eqref{datsysVisc1}.
 Moreover, 
 \begin{equation*}
  (\widehat{v}(x,t),\widehat{u}(x,t))=\begin{cases}
                      (v_-,u_-), &\mbox{if } x<\frac{\sigma}{\alpha k} (1-e^{-\alpha kt}),\\
                      (\frac{w_0}{\alpha k}(1-e^{-\alpha kt})\delta(x-\frac{\sigma}{\alpha k} (1-e^{-\alpha kt})), \sigma), &\mbox{if } x=\frac{\sigma}{\alpha k} (1-e^{-\alpha kt}),\\
                      (v_+,u_+), &\mbox{if } x>\frac{\sigma}{\alpha k} (1-e^{-\alpha kt}),
                     \end{cases}
 \end{equation*}
where $\sigma = \frac{1}{k+1}\sum\limits_{j=0}^k u_-^{k-j}u_+^j$ and $w_0=-\sigma(v_--v_+)+(v_-u_-^k-v_+u_+^k)$. Moreover, $\sigma$ satisfies the entropy condition $u_+^k <\sigma <u_-^k$.
\end{theorem}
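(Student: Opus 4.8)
The plan is to pull the limit already obtained in the similarity variable $\xi$ back to the physical variables $(x,t)$, and then to check by hand that the resulting pair is a weak solution of \eqref{sys1}--\eqref{datsysVisc1}.

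By the preceding lemma, for each fixed $t>0$ the self–similar profiles $\xi\mapsto(\widehat v^\varepsilon(\xi),\widehat u^\varepsilon(\xi))$ converge, as $\varepsilon\to 0+$, to $(v_-,u_-)$ for $\xi<\sigma$ and to $(v_+,u_+)$ for $\xi>\sigma$, while $\widehat v^\varepsilon$ concentrates a Dirac mass of weight $w_0=-\sigma[v]+[vu^k]$ at $\xi=\sigma$, with $\sigma=\frac1{k+1}\sum_{j=0}^k u_-^{k-j}u_+^j$ and $u_+^k<\sigma<u_-^k$; the value of $\widehat u$ carried on the concentration set is $\widehat u(\sigma):=\lim_{\varepsilon\to0+}\widehat u^\varepsilon(\xi_\sigma^\varepsilon)$, which by the lemma satisfies $(\widehat u(\sigma))^k=\sigma$. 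Since, for fixed $t>0$, $x\mapsto\xi=\frac{\alpha kx}{1-e^{-\alpha kt}}$ is an affine bijection of $\re$, the line $\xi=\sigma$ becomes the curve $x=\phi(t):=\frac{\sigma}{\alpha k}(1-e^{-\alpha kt})$; the step parts transfer verbatim, and in the pairing with a test function the identity $\delta(\xi-\sigma)=\frac{1-e^{-\alpha kt}}{\alpha k}\,\delta\big(x-\phi(t)\big)$ shows that the Dirac mass in $x$ acquires the weight $w(t):=\frac{w_0}{\alpha k}(1-e^{-\alpha kt})$. Combining the uniform convergence away from the concentration point, the uniform-in-$\varepsilon$ bound on $\|\widehat v^\varepsilon\|_{L^1}$ coming from the previous lemmas, and dominated convergence in $t$, this pointwise-in-$t$ statement lifts to convergence of $(\widehat v^\varepsilon,\widehat u^\varepsilon)$ in the measure sense on $\{t>0\}$, giving exactly the pair displayed in the statement.

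It then remains to verify that this pair solves \eqref{sys1}. For the scalar equation $\widehat u_t+\frac1{k+1}e^{-\alpha kt}(\widehat u^{k+1})_x=0$, $\widehat u$ is the step function jumping from $u_-$ to $u_+$ across $x=\phi(t)$, so the only requirement is the Rankine--Hugoniot relation $\phi'(t)[u]=\frac1{k+1}e^{-\alpha kt}[u^{k+1}]$; since $\phi'(t)=\sigma e^{-\alpha kt}$ this is precisely the identity $\sigma[u]=\frac1{k+1}[u^{k+1}]$ from Lemma \ref{lemmaA}, and $u_+^k<\sigma<u_-^k$ is the corresponding Lax entropy inequality. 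For $\widehat v_t+e^{-\alpha kt}(\widehat v\,\widehat u^k)_x=0$ I would test against $\psi\in C_0^\infty$, split $\{t>0\}$ along $x=\phi(t)$, integrate by parts in each half, and collect the contribution of the Dirac mass travelling along the curve (with the value $\widehat u(\sigma)$ on it). This produces the generalized Rankine--Hugoniot conditions $e^{-\alpha kt}(\widehat u(\sigma))^k=\phi'(t)$ and $w'(t)=e^{-\alpha kt}[vu^k]-\phi'(t)[v]$: the first holds because $(\widehat u(\sigma))^k=\sigma$ and $\phi'(t)=\sigma e^{-\alpha kt}$, the second because $w'(t)=w_0 e^{-\alpha kt}=e^{-\alpha kt}\big([vu^k]-\sigma[v]\big)$. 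Finally $\phi(t)\to0$ and $w(t)\to0$ as $t\to0+$, so the trace of $(\widehat v,\widehat u)$ at $t=0$ is the Riemann data \eqref{datsysVisc1}.

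The main obstacle is the rigorous justification of this change of variables at the level of measures: the preceding lemma gives only distributional convergence of the frozen self–similar profiles in $\xi$, and one must promote it to convergence of the two-variable densities as measures on the half-plane, with uniform control near $t=0+$ (where $x\mapsto\xi$ degenerates) and for $t$ large. The uniform-in-$\varepsilon$ $L^1$ estimates on $\widehat v^\varepsilon$ and the explicit, $\varepsilon$-independent concentration curve $x=\phi(t)$ are exactly what make this step go through; once it is in place, the remaining verifications are routine Rankine--Hugoniot bookkeeping.
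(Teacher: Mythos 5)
Your proposal is correct and takes essentially the same route as the paper: the paper states this theorem with no separate proof, as an immediate consequence of Lemmas \ref{lemmaA}, \ref{lemmaB} and the delta-concentration lemma after undoing the similarity change of variables $\xi=\frac{\alpha k x}{1-e^{-\alpha kt}}$, and the Rankine--Hugoniot/weak-solution bookkeeping you carry out is precisely what the paper performs later in the proof of Theorem \ref{Thm4.1} for the transformed pair $(v,u)=(\widehat{v},\widehat{u}e^{-\alpha t})$. Your explicit handling of the Jacobian factor that produces the weight $\frac{w_0}{\alpha k}(1-e^{-\alpha kt})$, of the value on the discontinuity via $(\widehat{u}(\sigma))^k=\sigma$, and of the passage from frozen profiles in $\xi$ to two-variable convergence in the measure sense is, if anything, more careful than the paper's own presentation.
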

%Finally, from the previous theorem we can conclude that the function $u_\delta(t)$ is a constant which we denote by $u_\delta$. %, $x(t)=f(u_\delta)t$ and $w(t)=(-f(u_\delta)[\rho]+[\rho f(u)])t$. Moreover by the above results, the uniqueness of delta shock wave is guaranteed by entropy condition
%\eqref{entropyCond1} (see also \eqref{entropyCond}).

{\bf Case 2. $u_-<u_+$} 
\begin{lemma}
%Suppose $u_-<u_+$. 
For any $\eta >0$,
$$ \lim_{\varepsilon \to 0+} \widehat{u}_\xi^\varepsilon(\xi) =0, \mbox{ for } \xi \le u_-^k-\eta \mbox{ or } \xi \ge u_+^k+\eta, $$
$$
\lim_{\varepsilon \to 0+} (\widehat{v}^\varepsilon(\xi), \widehat{u}^\varepsilon(\xi))=
\begin{cases}
(v_-,u_-), &\mbox{if } \xi < \xi_{\sigma_1}-\eta, \\
(0,\xi), &\mbox{if }  \xi_{\sigma_1}-\eta \le \xi \le \xi_{\sigma_2}+\eta, \\
(v_+,u_+), &\mbox{if } \xi > \xi_{\sigma_2}+\eta,
\end{cases}
$$
uniformly in the above intervals.
\end{lemma}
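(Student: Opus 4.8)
The plan is to transplant the arguments of Lemmas~\ref{lemmaA} and~\ref{lemmaB} to the increasing profile ($u_-<u_+$) and to use Lemma~\ref{Lemma2.3} for the behaviour on the plateau. Write $\widehat u=\widehat u^\varepsilon$, $\widehat v=\widehat v^\varepsilon$. By Lemma~\ref{Lemma2.1} (and the existence theorem for \eqref{ViscEq1}) $\widehat u$ is strictly increasing, so $u_-\le\widehat u(\xi)\le u_+$ for all $\xi$; evaluating the fixed point equation at $\xi_{\sigma_1}^\varepsilon,\xi_{\sigma_2}^\varepsilon$ gives $u_-^k\le\xi_{\sigma_1}^\varepsilon\le\xi_{\sigma_2}^\varepsilon\le u_+^k$, and by Lemma~\ref{Lemma2.3} $\widehat v\equiv 0$ on $[\xi_{\sigma_1}^\varepsilon,\xi_{\sigma_2}^\varepsilon]$. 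As in \eqref{Ig1} the first equation of \eqref{sysVis2} integrates to $\widehat u'(\xi)=\widehat u'(\zeta)\exp\!\left(\tfrac1\varepsilon\int_\zeta^\xi(\widehat u^k(s)-s)\,ds\right)$, with $\widehat u'>0$ throughout.

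\emph{Step 1: derivative and values of $\widehat u$ off the fan.} Fix $\xi_3=u_-^k-\eta/2$. Integrating the first equation of \eqref{sysVis2} twice on $[\xi,\xi_3]$ and letting $\xi\to-\infty$, exactly as in Lemma~\ref{lemmaA} but now bounding $\widehat u^k(s)\le u_+^k$ in the exponent (the sign of $\widehat u'$ being reversed relative to Case~1), one obtains a bound $|\widehat u'(\xi_3)|\le C_\eta\varepsilon^{-1}$ with $C_\eta$ independent of $\varepsilon$. Since $\widehat u^k(s)-s\ge u_-^k-s\ge\eta/2$ for $s\le\xi_3$, the first integral then gives $|\widehat u'(\xi)|\le C_\eta\varepsilon^{-1}\exp\!\left(-\tfrac{\eta}{2\varepsilon}(\xi_3-\xi)\right)$, whence $\widehat u_\xi^\varepsilon\to0$ uniformly on $\{\xi\le u_-^k-\eta\}$; integrating once more from $-\infty$ yields $|\widehat u^\varepsilon(\xi)-u_-|\le\tfrac{2C_\eta}{\eta}\exp(-\eta^2/4\varepsilon)\to0$ uniformly there. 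The mirror computation at $\xi_3'=u_+^k+\eta/2$, using $s-\widehat u^k(s)\ge\eta/2$ for $s\ge\xi_3'$ and $\widehat u(+\infty)=u_+$, gives $\widehat u_\xi^\varepsilon\to0$ and $\widehat u^\varepsilon\to u_+$ uniformly on $\{\xi\ge u_+^k+\eta\}$.

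\emph{Step 2: the fan, and the location of $\xi_{\sigma_1},\xi_{\sigma_2}$.} Pass to the limit in the weak form of \eqref{ViscEq1}: for $\phi\in C_0^\infty$, $\int\widehat u^\varepsilon\!\left((\xi\phi)'-\tfrac1{k+1}(\widehat u^\varepsilon)^k\phi'\right)d\xi=\varepsilon\int\widehat u^\varepsilon\phi''\,d\xi\to0$ since $\widehat u^\varepsilon$ is uniformly bounded. By Helly's selection theorem the monotone, uniformly bounded family $\widehat u^\varepsilon$ has an a.e.\ limit $\widehat u$ along a subsequence; $\widehat u$ is increasing, equals $u_-$ for $\xi<u_-^k$ and $u_+$ for $\xi>u_+^k$ by Step~1, and satisfies $(\widehat u^k-\xi)\widehat u_\xi=0$ in the distributional sense. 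Hence on any interval where $\widehat u$ is not locally constant one has $\widehat u^k(\xi)=\xi$; together with monotonicity and the boundary values this forces $\widehat u(\xi)=\xi^{1/k}$ on $[u_-^k,u_+^k]$ and constant outside. This pins the limit down uniquely (so no subsequence is needed), gives $\xi_{\sigma_1}^\varepsilon\to u_-^k$ and $\xi_{\sigma_2}^\varepsilon\to u_+^k$, and yields $(\widehat u^\varepsilon(\xi))^k\to\xi$ uniformly on compact subsets of the open fan $(u_-^k,u_+^k)$; on any such subset $\widehat v^\varepsilon\equiv0$ for $\varepsilon$ small, by Lemma~\ref{Lemma2.3}.

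\emph{Step 3: $\widehat v$ off the fan, and the main difficulty.} For $\xi\le u_-^k-\eta$ (so $\xi<\xi_{\sigma_1}^\varepsilon$) formula \eqref{solRho1} reads $\widehat v^\varepsilon(\xi)=v_-\exp\!\left(-\int_{-\infty}^\xi\tfrac{(\widehat u^k(s))'}{\widehat u^k(s)-s}\,ds\right)$; since $\widehat u^k(s)-s\ge\eta$ and $(\widehat u^k(s))'\ge0$ on $(-\infty,\xi]$, we get $0\le\int_{-\infty}^\xi\tfrac{(\widehat u^k(s))'}{\widehat u^k(s)-s}\,ds\le\tfrac1\eta\!\left(\widehat u^k(\xi)-u_-^k\right)\to0$ uniformly by Step~1, so $\widehat v^\varepsilon(\xi)\to v_-$ uniformly; the symmetric estimate built on \eqref{solRho2} gives $\widehat v^\varepsilon(\xi)\to v_+$ uniformly on $\{\xi\ge u_+^k+\eta\}$. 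Combining with Step~2 gives all three stated limits. The delicate points are the $\varepsilon$-uniform bound on $|\widehat u'(\xi_3)|$ in Step~1 (which rests on a Gaussian-type estimate $\int_{-\infty}^0 e^{(cr-r^2/2)/\varepsilon}\,dr\ge c'\varepsilon$ for $c>0$, $\varepsilon\le1$) and, conceptually, the assertion in Step~2 that the vanishing viscosity limit opens the rarefaction fan all the way to $[u_-^k,u_+^k]$ --- i.e.\ $\xi_{\sigma_i}^\varepsilon\to u_\pm^k$ --- which uses the genuine nonlinearity of the flux $u\mapsto\tfrac1{k+1}u^{k+1}$ making $\xi\mapsto\xi^{1/k}$ the unique increasing branch of $\widehat u^k(\xi)=\xi$.
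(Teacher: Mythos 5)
Your Steps 1 and 3 are essentially the paper's own argument (integrate the first equation of \eqref{sysVis2} twice on $[\xi,\xi_3]$, get an $\varepsilon$-uniform bound on $\widehat u'(\xi_3)$, then exponential decay of $\widehat u'$ and of $\widehat u-u_\pm$ off the fan; then estimate the exponents in \eqref{solRho1}--\eqref{solRho2} using $\widehat u^k(s)-s\ge\eta$), and your explicit handling of the sign reversal (comparing the exponent with $u_+^k$ rather than $u_-^k$, accepting an $O(\varepsilon^{-1})$ bound) is in fact more careful than the paper's transcription of the Case 1 inequalities. The direct squeeze $0\le\int_{-\infty}^{\xi}\frac{(\widehat u^k)'}{\widehat u^k-s}\,ds\le\frac1\eta(\widehat u^k(\xi)-u_-^k)$ is also a clean substitute for the paper's lower bound $\widehat v_1(\xi)\ge v_-\frac{u_-^k-\xi}{\widehat u^k(\xi)-\xi}$ plus monotonicity.

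The genuine gap is in Step 2. The assertion that ``increasing $+$ distributional solution $+$ correct far-field values forces $\widehat u(\xi)=\xi^{1/k}$ on $[u_-^k,u_+^k]$'' is false: the single jump from $u_-$ to $u_+$ at $\xi_0=\frac{1}{k+1}\sum_{j=0}^{k}u_-^{k-j}u_+^{j}$ satisfies the Rankine--Hugoniot relation $\xi_0[u]=\frac1{k+1}[u^{k+1}]$, hence is an increasing weak solution of $\int\widehat u\bigl((\xi\phi)'-\frac1{k+1}\widehat u^{k}\phi'\bigr)d\xi=0$ with the same constants outside $[u_-^k,u_+^k]$ (it is exactly the Case 1 discontinuity); so the conservative weak limit, monotonicity and boundary values alone cannot ``pin the limit down uniquely.'' Consequently neither $\xi_{\sigma_1}^\varepsilon\to u_-^k$, $\xi_{\sigma_2}^\varepsilon\to u_+^k$ nor the middle limit $(0,\xi^{1/k})$ is established, and your appeal to Lemma \ref{Lemma2.3} to get $\widehat v^\varepsilon\equiv0$ on compact subsets of $(u_-^k,u_+^k)$ presupposes precisely this unproved convergence of the fixed points. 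Some additional input from the viscous structure is needed to exclude the jump. The paper obtains it from the \emph{second} equation: by Lemma \ref{Lemma2.3}, $\widehat v^\varepsilon\equiv0$ on $[\xi_{\sigma_1}^\varepsilon,\xi_{\sigma_2}^\varepsilon]$ while $\widehat v^\varepsilon\to v_\pm\neq0$ outside, and passing to the limit in the weak form of \eqref{Eqv2} at the two edges of the vacuum yields the Rankine--Hugoniot relations $\xi_{\sigma_1}v_-=v_-u_-^k$ and $\xi_{\sigma_2}v_+=v_+u_+^k$, i.e. $\xi_{\sigma_1}=u_-^k$ and $\xi_{\sigma_2}=u_+^k$; only then is $(\widehat u(\xi))^k=\xi$ inside deduced (via Volpert's chain rule). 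Alternatively, an entropy/Oleinik-type one-sided estimate on $\widehat u^\varepsilon_\xi$ inside the fan would rule out the increasing jump, but invoking ``genuine nonlinearity'' of the flux, as you do at the end, is a statement about the equation $\widehat u^k=\xi$, not an argument excluding jump limits, so as written the proposal does not prove the second display of the lemma.
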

\begin{proof}
Since $\widehat{u}$ is a increasing smooth function in $(-\infty,+\infty)$, then 
$u_- \le \widehat{u}(\xi_{\sigma_1}^\varepsilon) \le \widehat{u}(\xi) \le \widehat{u}(\xi_{\sigma_2}^\varepsilon) \le u_+$ or $u_-^k \le \xi_{\sigma_1}^\varepsilon \le (\widehat{u}(\xi))^k \le \xi_{\sigma_2}^\varepsilon \le u_+^k$.

The proof of this lemma is basically similar to that of Lemma \ref{lemmaA}.
Take $\xi_3 = \xi_{\sigma_1} -\eta/2$ and let $\varepsilon$ be so small such that $\xi_{\sigma_1}^\varepsilon > \xi_3 + \eta/4$. Integrating the  first equation of \eqref{sysVis2} twice on $[\xi,\xi_3]$, we get
\begin{align*}
\widehat{u}(\xi_3)-\widehat{u}(\xi)&=\widehat{u}'(\xi_3) \int_{\xi}^{\xi_3} \exp \left( -\int_r^{\xi_3} \frac{(\widehat{u}(s))^k-s}{\varepsilon}ds \right) dr\\
& \ge \widehat{u}'(\xi_3) \int_{\xi}^{\xi_3} \exp \left( -\int_r^{\xi_3} \frac{u_-^k-s}{\varepsilon}ds \right) dr\\
&= \widehat{u}'(\xi_3) \int_\xi^{\xi_3} \exp \left( \frac{1}{\varepsilon} \left( \left(u_-^k-\xi_3 \right)(r-\xi_3)-\frac12(r-\xi_3)^2 \right) \right) dr\\
  &= \widehat{u}'(\xi_3) \int_{\xi-\xi_3}^0 \exp \left( \frac{1}{\varepsilon} \left( \left(u_-^k-\xi_3 \right)r-\frac12 r^2 \right) \right) dr.
\end{align*}
Letting $\xi \to -\infty$, we get
\begin{align*}
 u_+-u_- &\ge \widehat{u}'(\xi_3) \int_{-\infty}^0 \exp \left( \frac{1}{\varepsilon} \left( \left(u_-^k-\xi_3 \right)r-\frac12 r^2 \right) \right) dr\\
 &\ge \widehat{u}'(\xi_3) \int_{0}^{2\varepsilon} \exp \left( -\frac{1}{\varepsilon} \left( \left(u_-^k-\xi_3 \right)r+\frac12 r^2 \right) \right) dr\\
 &\ge \widehat{u}'(\xi_3) \sqrt{\varepsilon}A_3
\end{align*}
for $0\le \varepsilon \le 1$, where $A_3$ is a constant independent of $\varepsilon$. Thus
\begin{equation*}
 |\widehat{u}'(\xi_3)| \le \frac{u_+-u_-}{\sqrt{\varepsilon}A_3}.
\end{equation*}
Noticing that 
\begin{align*}
(\widehat{u}(s))^k-s &=((\widehat{u}(s))^k-(\widehat{u}(\xi_{\sigma_1}^\varepsilon))^k)-(s-\xi_{\sigma_1}^\varepsilon) 
= (k (\widehat{u}(\theta))^{k-1}u'(\theta)-1)(s-\xi_{\sigma_1}^\varepsilon) \ge \frac{\eta}{4}
\end{align*}
for $s \le \xi_3$ and from \eqref{eqC} we have
\begin{equation*}
 |\widehat{u}'(\xi)| \le \frac{u_+-u_-}{\sqrt{\varepsilon}A_3}\exp \left( -\frac{\eta}{4\varepsilon} \left( \xi_3-\xi \right) \right)
\end{equation*}
which implies that
$$ \lim_{\varepsilon \to 0+} \widehat{u}_\xi^\varepsilon(\xi)=0, \qquad \mbox{uniformly for }\xi \le \xi_{\sigma_1}-\eta. $$
Now, we choose $\xi$ and $\xi_4$ such that $\xi<\xi_4 \le \xi_{\sigma_1}-\eta$.
From
$$ \widehat{u}(\xi_4)-\widehat{u}(\xi)=\widehat{u}'(\xi_4)\int_\xi^{\xi_4} \exp \left(- \int_r^{\xi_4}
\frac{(\widehat{u}(s))^k-s}{\varepsilon} ds \right) dr, $$
%$s<\xi_4$ then $u^k(s)>u^k(\xi_4)$ and $-s>-\xi_4$ so, $(k+1)u^k(s)-s>(k+1)u^k(\xi_4)-\xi_4$
we get
\begin{align*}
 | \widehat{u}(\xi_4)-\widehat{u}(\xi)| &\le |\widehat{u}'(\xi_4)| \int_\xi^{\xi_4} \exp \left(
\frac{A_4}{\varepsilon}(r-\xi_4) \right)dr \le \frac{\varepsilon}{A_4}|\widehat{u}'(\xi_4)| \left(1-\exp \left(
\frac{A_4}{\varepsilon}(\xi-\xi_4) \right) \right),
\end{align*}
where $A_4=(\widehat{u}(\xi_4))^k - \xi_4$.
When $\xi \to-\infty$, we obtain $ |\widehat{u}(\xi_4)-u_-| \le \frac{\varepsilon}{A_4}|\widehat{u}'(\xi_4)|$,
%\begin{equation*}
% |\widehat{u}(\xi_4)-u_-| \le \frac{\varepsilon}{A_4}|\widehat{u}'(\xi_4)|,
%\end{equation*}
which implies that
\begin{equation*}
 \lim_{\varepsilon \to 0+} \widehat{u}^\varepsilon(\xi)=u_-, \qquad \mbox{uniformly
for } \xi < \xi_{\sigma_1}-\eta.
\end{equation*}
The results for $\xi >\xi_{\sigma_2}+\eta$ can be obtained analogously.\\
Now, noticing that for $\xi < \xi_{\sigma_1}$,
\begin{align}
\widehat{v}_1(\xi) &= v_- \exp \left( - \int_{-\infty}^\xi \frac{((\widehat{u}(s))^k)'}{(\widehat{u}(s))^k-s} ds \right)
=\lim_{R \to +\infty} v_- \exp \left( - \int_{-R}^\xi \frac{((\widehat{u}(s))^k-s)'+1}{(\widehat{u}(s))^k-s} ds \right) \nonumber \\
& \ge \lim_{R \to +\infty} v_- \left( \frac{u_-^k+R}{(\widehat{u}(\xi))^k-\xi} \right) \exp \left( - \int_{-R}^\xi \frac{ds}{u_-^k-s} \right)
= v_- \left( \frac{u_-^k-\xi}{(\widehat{u}(\xi))^k-\xi} \right). \label{desg1}
\end{align}
By Lemma \ref{Lemma2.3}, $\widehat{v}_1(\xi)$ is decreasing for $\xi < \xi_{\sigma_1}$ and from \eqref{desg1} we have
$$ v_- \ge \widehat{v}_1(\xi) \ge v_- \left( \frac{u_-^k-\xi}{(\widehat{u}(\xi))^k-\xi} \right) \to v_-, \quad \mbox{as } \varepsilon \to 0+. $$
Thus, $$ \lim\limits_{\varepsilon \to 0+} \widehat{v}^\varepsilon(\xi)=v_-, \quad
\mbox{uniformly for } \xi < \xi_{\sigma_1}-\eta. $$
Analogously,  we obtain $\lim\limits_{\varepsilon \to 0+} \widehat{v}^\varepsilon(\xi)=v_+$, uniformly for $\xi > \xi_{\sigma_2}+\eta$.
From Lemma \ref{Lemma2.3}, on $[\xi_{\sigma_1},\xi_{\sigma_2}]$ %$[\xi_{\sigma_1}+\eta,\xi_{\sigma_2}-\eta]$
we have that $\widehat{v}(\xi)=0$.
Now, choose $\eta_1 >0$ and let $\phi \in C_0^\infty((\xi_1,\xi_2))$ where $\xi_1 < \xi_{\sigma_1}-\eta_1 < \xi_2$. From \eqref{Eqv2} we have
$$ 0=\int_{\xi_1}^{\xi_2} (\widehat{v}(\xi) (\xi \phi(\xi))'-\widehat{v}(\xi) (\widehat{u}(\xi))^k \phi'(\xi)) d\xi
= \int_{\xi_1}^{\xi_{\sigma_1-\eta_1}} (v_- (\xi \phi(\xi))'-v_- u_-^k \phi'(\xi)) d\xi. $$
Thus, we have
$ v_- (\xi_{\sigma_1-\eta_1} \phi(\xi_{\sigma_1-\eta_1})- u_-^k \phi(\xi_{\sigma_1-\eta_1}))=0 $
which yields $\xi_{\sigma_1}= u_-^k$ for arbitrary $\phi$ and arbitrary $\eta_1$.
Analogously,  we obtain $\xi_{\sigma_2}= u_+^k$.\\
For $\xi \in [\xi_{\sigma_1} + \eta, \xi_{\sigma_2} - \eta]$, denote $\lim\limits_{\varepsilon \to 0+} \widehat{u}^\varepsilon(\xi)=\widehat{u}(\xi)$. Thus, from the chain rule of Volpert for BV functions \cite{Volpert, LeFloch}, Eq. \eqref{ViscEq1} and \eqref{Eqv2}, we have that $(\widehat{u}(\xi))^k=\xi$ with $\widehat{u}(u_-^k)=u_-$ and $\widehat{u}(u_+^k)=u_+$. Also, (with Lemma \ref{Lemma2.3}) we have 
$\lim\limits_{\varepsilon \to 0+} \widehat{v}^\varepsilon(\xi)=0$.
%{\color{blue}
%Let $d\mu = \frac{d\widehat{u}}{d\xi}$ be the measure that corresponds to the weak
%derivative of $\widehat{u}$.
% i.e., corresponding to the linear functional: $\phi \mapsto -\int \phi'(\xi) \widehat{u}(\xi) d\xi$, for $\phi \in C_0^1(\mathbb{R})$
%By the chain rule of Volpert for BV functions \cite{Volpert}, the Eq. \eqref{ViscEq1} 
%}
\end{proof}

Now, we study the limit behavior of $(\widehat{v}^\varepsilon, \widehat{u}^\varepsilon)$ %in the neighborhood of $\xi_\sigma$ 
as $\varepsilon \to 0+$.
\begin{theorem}
Suppose $u_+>u_-$. Let $(\widehat{u}^\varepsilon, \widehat{v}^\varepsilon)$ be the solution of the Riemann problem \eqref{sysVis2}-\eqref{datsysVisc2}. Then, $\lim\limits_{\varepsilon \to 0+} (\widehat{v}^\varepsilon(x,t),\widehat{u}^\varepsilon(x,t))
= (\widehat{v}(x,t),\widehat{u}(x,t))$ exists in the sense of distributions and $(\widehat{v},\widehat{u})$ solves \eqref{sys1}-\eqref{datsysVisc1}. Moreover,
$$
(\widehat{v}(x,t),\widehat{u}(x,t)) = 
\begin{cases}
(v_-,u_-), &\mbox{if } x< \frac{u_-^k}{\alpha k} (1-e^{-\alpha kt}), \\
(0,(\frac{\alpha kx}{1-e^{-\alpha kt}})^{1/k}), &\mbox{if } \frac{u_-^k}{\alpha k} (1-e^{-\alpha kt}) \le x \le \frac{u_+^k}{\alpha k} (1-e^{-\alpha kt}) ,\\
(v_+,u_+), &\mbox{if } x> \frac{u_+^k}{\alpha k} (1-e^{-\alpha kt}).
\end{cases}
$$
\end{theorem}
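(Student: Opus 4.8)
The plan is to assemble the convergence statements of the preceding lemma, pass to the limit $\varepsilon\to0+$ in the weak formulations of \eqref{ViscEq1} and \eqref{Eqv2}, and then transfer the resulting self-similar weak solution back to the $(x,t)$ variables. From the preceding lemma we already have $\xi_{\sigma_1}=u_-^k$, $\xi_{\sigma_2}=u_+^k$, that $\widehat u^\varepsilon$ is increasing and uniformly bounded by $\max\{u_-,u_+\}$, and that, for every $\eta>0$, $\widehat u^\varepsilon\to\widehat u$ and $\widehat v^\varepsilon\to\widehat v$ uniformly on $\{\xi<u_-^k-\eta\}$ (limit $(v_-,u_-)$), on $\{u_-^k+\eta\le\xi\le u_+^k-\eta\}$ (limit $(0,\xi^{1/k})$), and on $\{\xi>u_+^k+\eta\}$ (limit $(v_+,u_+)$). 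Since $\widehat v^\varepsilon\in L^1(\re)$ uniformly in $\varepsilon$ by Lemma~\ref{Lemma2.3} and $\widehat v_1,\widehat v_2$ decay to $0$ as $\xi\to\xi_{\sigma_1}-$ and $\xi\to\xi_{\sigma_2}+$, no mass escapes to $\pm\infty$ and none concentrates on the lines $\xi=u_\pm^k$; hence $\widehat v^\varepsilon\to\widehat v$ in $L^1_{loc}(\re)$, a fortiori in the sense of distributions, and $\widehat u^\varepsilon\to\widehat u$ in $L^1_{loc}$ by monotonicity and boundedness.

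Next I would pass to the limit in the integral identities. Testing \eqref{ViscEq1} against $\phi\in C_0^\infty(\re)$ gives $\int\widehat u^\varepsilon\big((\xi\phi)'-\tfrac1{k+1}(\widehat u^\varepsilon)^k\phi'\big)d\xi=\varepsilon\int\widehat u^\varepsilon\phi''d\xi$; the right-hand side is $O(\varepsilon)$ because $\widehat u^\varepsilon$ is uniformly bounded, while the left-hand side converges by splitting the integral over the three regions above together with a shrinking neighbourhood of $\{u_-^k,u_+^k\}$ (whose contribution is controlled by the uniform $L^1$ bound). This yields that $\widehat u$ is a weak solution of $-\xi\widehat u_\xi+\tfrac1{k+1}(\widehat u^{k+1})_\xi=0$ with $\widehat u(\pm\infty)=u_\pm$. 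The same splitting applied to \eqref{Eqv2} — which has no viscous term — shows $\widehat v$ is a weak solution of $-\xi\widehat v_\xi+(\widehat v\,\widehat u^k)_\xi=0$. Because here the two characteristic points $\xi=u_-^k$ and $\xi=u_+^k$ are distinct and $\widehat v$ vanishes between them, no Dirac term appears, in contrast with Case~1: the limit is an ordinary locally bounded $L^1_{loc}$ function.

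Finally I would undo the similarity change of variables. With $\xi=\frac{\alpha k x}{1-e^{-\alpha kt}}$, a self-similar weak solution $(\widehat v(\xi),\widehat u(\xi))$ of \eqref{sysVis2} at $\varepsilon=0$ produces, via substitution in the space--time weak formulation (the $t$-derivative of $\xi$ being absorbed exactly as in the derivation of \eqref{sysVis2} from \eqref{sys1}), a weak solution $(\widehat v(x,t),\widehat u(x,t))$ of \eqref{sys1} attaining the initial data \eqref{datsysVisc1}. Reading off the three regions in terms of $x$: $\xi<u_-^k\Leftrightarrow x<\frac{u_-^k}{\alpha k}(1-e^{-\alpha kt})$ gives $(v_-,u_-)$; $u_-^k\le\xi\le u_+^k$ gives $\widehat v=0$ and $\widehat u=\xi^{1/k}=\big(\frac{\alpha kx}{1-e^{-\alpha kt}}\big)^{1/k}$; and $\xi>u_+^k$ gives $(v_+,u_+)$, which is exactly the claimed formula. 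The main obstacle is the one flagged in the first paragraph: controlling $\widehat v^\varepsilon$ uniformly near the edges $\xi=u_\pm^k$ so that the distributional limit carries no concentrated part there; this is supplied by the estimate \eqref{IntInf1} and the monotonicity of $\widehat v_1,\widehat v_2$ in Lemma~\ref{Lemma2.3}, and once it is in hand the remaining steps are a routine passage to the limit together with the change of variables.
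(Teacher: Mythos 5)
Your proposal is correct and matches the paper's intent: the paper states this theorem with no written proof, treating it as an immediate consequence of the preceding lemma (exactly parallel to the Case~1 treatment), and your argument supplies precisely that routine completion — assemble the uniform limits from the lemma, pass to the limit in the weak formulations of \eqref{ViscEq1} and \eqref{Eqv2}, and undo the similarity substitution $\xi=\frac{\alpha kx}{1-e^{-\alpha kt}}$. The only point needing care, excluding a concentrated part at $\xi=u_\pm^k$, is handled correctly, though it is worth noting that the decisive fact is not the uniform $L^1$ bound you cite first (which alone would not prevent concentration) but the uniform pointwise bound $0\le\widehat{v}^\varepsilon\le\max\{v_-,v_+\}$ that follows from the monotonicity of $\widehat{v}_1,\widehat{v}_2$ together with their vanishing at $\xi_{\sigma_1}^\varepsilon,\xi_{\sigma_2}^\varepsilon$ (via \eqref{IntInf1}), which you do invoke in your closing paragraph.
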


\section{Riemann problem for the system \texorpdfstring{\eqref{system_ld}}{(1)}}
In this section, we study the Riemann problem to the original system \eqref{system_ld}. When $u_-<u_+$, the solution of \eqref{system_ld}--\eqref{datoRiemann} 
is directly obtained from the corresponding ones to \eqref{sys1}--\eqref{datsysVisc1} by performing the transformation of state variables $(v(x,t),u(x,t)) = (\widehat{v}(x,t),\widehat{u}(x,t)e^{-\alpha t})$, in which the positions of the contact discontinuities remain unchanged. Then, we have the following result for classical Riemann solutions.
\begin{theorem} \label{ThmCRS}
Assume that $u_-<u_+$. Then the solution for the Riemann problem is
$$
(v(x,t),u(x,t)) = 
\begin{cases}
(v_-,u_-e^{-\alpha t}), &\mbox{if } x< \frac{u_-^k}{\alpha k} (1-e^{-\alpha kt}), \\
(0,(\frac{\alpha kx}{1-e^{-\alpha kt}})^{1/k}e^{-\alpha t}), &\mbox{if } \frac{u_-^k}{\alpha k} (1-e^{-\alpha kt}) \le x \le \frac{u_+^k}{\alpha k} (1-e^{-\alpha kt}) ,\\
(v_+,u_+e^{-\alpha t}), &\mbox{if } x> \frac{u_+^k}{\alpha k} (1-e^{-\alpha kt}).
\end{cases}
$$
\end{theorem}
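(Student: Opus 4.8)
The plan is to reduce the statement to the preceding theorem (the Case~2 vanishing-viscosity limit) combined with the elementary change of unknowns already recorded in the introduction. First I would make precise the $\varepsilon\to0+$ analogue of the substitution stated there for the viscous systems: if $(\widehat v,\widehat u)$ is a weak solution of \eqref{sys1}--\eqref{datsysVisc1}, then $(v,u):=(\widehat v,\widehat u\,e^{-\alpha t})$ is a weak solution of \eqref{system_ld}--\eqref{datoRiemann}. To check this one sets $\widehat u=u\,e^{\alpha t}$, uses $\widehat u^{\,k+1}=u^{k+1}e^{\alpha(k+1)t}$ and $\widehat u^{\,k}=u^{k}e^{\alpha kt}$, and verifies that the factors $e^{-\alpha kt}$ in front of the flux terms of \eqref{sys1} cancel exactly: the first equation then acquires precisely the damping term $-\alpha u$ and the second becomes $v_t+(vu^k)_x=0$. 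Since $t\mapsto e^{-\alpha t}$ is smooth and the transformation does not touch the spatial variable, the same computation, carried out against an arbitrary $\varphi\in C_0^\infty$, transfers the weak formulation; at $t=0$ the factor equals $1$, so the Riemann data \eqref{datoRiemann} is preserved.

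Second, I would invoke the preceding theorem to insert the explicit form of the limit solution $(\widehat v,\widehat u)$ of \eqref{sys1}--\eqref{datsysVisc1} in Case~2 and multiply the $\widehat u$-component by $e^{-\alpha t}$. This yields exactly the three branches in the statement: the interface curves $x=\tfrac{u_\pm^k}{\alpha k}(1-e^{-\alpha kt})$ are unchanged because the transformation acts only on the unknowns, on the left region $u=u_-e^{-\alpha t}$, on the right region $u=u_+e^{-\alpha t}$, and on the middle region $u=\bigl(\tfrac{\alpha kx}{1-e^{-\alpha kt}}\bigr)^{1/k}e^{-\alpha t}$ with $v\equiv0$.

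Third, I would verify directly that this piecewise function is a genuine weak solution across the two interfaces. Along $x=\tfrac{u_-^k}{\alpha k}(1-e^{-\alpha kt})$ the curve speed is $u_-^k e^{-\alpha kt}=u^k$, so $u$ is in fact continuous there (the left trace $u_-e^{-\alpha t}$ coincides with the middle trace $(u_-^k)^{1/k}e^{-\alpha t}$), while $v$ jumps from $v_-$ to $0$; hence the Rankine--Hugoniot relation for the second equation reads $u_-^k e^{-\alpha kt}\,v_-=[vu^k]$, which holds, and the bounded lower-order source $-\alpha u$ contributes nothing to the jump in the first equation. The interface $x=\tfrac{u_+^k}{\alpha k}(1-e^{-\alpha kt})$ is treated symmetrically, and in the open middle region a direct computation shows that $\bigl(\tfrac{\alpha kx}{1-e^{-\alpha kt}}\bigr)^{1/k}e^{-\alpha t}$ solves $u_t+\tfrac1{k+1}(u^{k+1})_x=-\alpha u$ classically while $v\equiv0$ trivially solves $v_t+(vu^k)_x=0$.

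The algebra of the change of variables and the Rankine--Hugoniot checks are routine; the only point needing a little care is that \eqref{system_ld} is not in conservation form, so the transfer of the weak-solution property and of the jump conditions must be done through the test-function formulation rather than by quoting a conservative jump condition, and one should note that no delta shock appears in this case precisely because $u_-<u_+$ forces $v\equiv0$ on the intermediate fan instead of a concentration of mass.
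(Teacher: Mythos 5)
Your proposal is correct and follows essentially the same route as the paper: Theorem \ref{ThmCRS} is obtained there precisely by applying the change of unknowns $(v,u)=(\widehat v,\widehat u\,e^{-\alpha t})$ to the limit solution of \eqref{sys1}--\eqref{datsysVisc1} from the preceding Case~2 theorem, with the interface curves left untouched. Your additional direct verification of the weak formulation and the jump conditions across the two interfaces goes beyond what the paper writes out, but it is consistent with and only reinforces the same argument.
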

It is clear that the above theorem generalizes the Theorem 3.1 in \cite{Delacruz}.
Now, we study the case when $u_- > u_+$. We need recall the following definition:
\begin{definition}
 A two-dimensional weighted delta function $w(s)\delta_L$ supported on a smooth curve $L=\{ (x(s),t(s))\, :\, a < s < b \}$, for $w\in L^1((a,b))$, is defined as
 \begin{equation*}
  \langle w(\cdot)\delta_L,\phi(\cdot,\cdot) \rangle = \int_a^b w(s) \phi(x(s),t(s)) \, ds, \quad \phi \in C_0^\infty(\re \times[0,\infty)). 
 \end{equation*}
\end{definition}
Now, we define a delta shock wave solution for the system \eqref{system_ld} with initial data \eqref{datoRiemann}.
\begin{definition}
 A distribution pair  $(v,u)$ is a {\em delta shock wave solution} of \eqref{system_ld} and \eqref{datoRiemann} in the sense of distribution if there exist a smooth curve $L$ and a function $w \in C^1(L)$ such that $v$ and $u$ are represented in the following form
 $$ v = \widetilde{v}(x,t)+w \delta_L \text{ and } u=\widetilde{u}(x,t), $$
 $\widetilde{v}, \widetilde{u} \in L^\infty(\re \times(0,\infty); \re)$ and
 \begin{equation} \label{weakSol1}
  \begin{cases}
  \langle u, \varphi_t \rangle + \langle u^{k+1} , \varphi_x \rangle = \int_0^\infty \int_\mathbb{R} \alpha u \varphi dx dt,\\
   \langle v, \varphi_t \rangle + \langle v u^k, \varphi_x \rangle =0,
  \end{cases}
 \end{equation}
for all the test functions $\varphi \in C_0^\infty (\re \times(0,\infty))$, where $u|_L=u_\delta(t)$ and
\begin{align*}
 \langle v,\varphi \rangle &= \int_0^\infty \int_\re \widetilde{v} \varphi \, dx dt + \langle w \delta_L, \varphi \rangle,\\
 \langle v G(u),\varphi \rangle &= \int_0^\infty \int_\re \widetilde{v} G(\widetilde{u}) \varphi \, dx dt + \langle w G(u_\delta) \delta_L, \varphi \rangle.
\end{align*}
\end{definition}
With the previous definitions, we are going to find a solution with discontinuity $x=x(t)$ for \eqref{system_ld} of the form
\begin{equation} \label{deltaSol1}
 (v(x,t),u(x,t))=\begin{cases}
                          (v_-(x,t),u_-(x,t)), &\text{if } x<x(t),\\
                          (w(t)\delta_L,u_\delta(t)), &\text{if } x=x(t),\\
                          (v_+(x,t),u_+(x,t)), &\text{if } x>x(t),
                      \end{cases}
\end{equation}
where $v_\pm(x,t)$, $u_\pm(x,t)$ are piecewise smooth solutions of system \eqref{system_ld}, $\delta(\cdot)$ is the 
 Dirac measure supported on the curve $x(t) \in C^1$, and $x(t)$, $w(t)$ and $u_\delta(t)$ are to be determined.\\
 
Since $v(x,t)=\widehat{v}(x,t)$ and $u(x,t)=\widehat{u}(x,t)e^{-\alpha t}$, from Theorem \ref{ThmFinal}, we can establish a solution of the form \eqref{deltaSol1} to the system \eqref{system_ld} with initial data \eqref{datoRiemann}. Thus, we have the following result.
\begin{theorem} \label{Thm4.1}
 Assume that $u_->u_+$.
 Then the Riemann problem \eqref{system_ld}--\eqref{datoRiemann} admits one and only one measure solution of the
form
\begin{equation} \label{sol_final}
  (v(x,t),u(x,t))=\begin{cases}
                      (v_-,u_-e^{-\alpha t}), &\mbox{if } x<x(t),\\
                      (w(t) \delta(x-x(t)), \sigma e^{-\alpha t}), &\mbox{if } x=x(t),\\
                      (v_+,u_+ e^{-\alpha t}), &\mbox{if } x>x(t),
                     \end{cases}
 \end{equation}
where $w(t)=\frac{w_0}{\alpha k}(1-e^{-\alpha kt})$, $x(t)=\frac{\sigma}{\alpha k} (1-e^{-\alpha kt})$, $\sigma=\sum\limits_{j=0}^k u_-^{k-j}u_+^j$ and $w_0=-\sigma(v_--v_+)+(v_- u_-^k-v_+ u_+^k)$. Moreover, $dx(t)/dt$ satisfies the entropy condition $u_+^k e^{-\alpha t} <dx(t)/dt < u_-^k e^{-\alpha t}$ for all $t \ge 0$. 
\end{theorem}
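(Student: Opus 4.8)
The plan is to leverage Theorem~\ref{ThmFinal} together with the transformation $(v,u)=(\widehat v,\widehat u e^{-\alpha t})$, and then verify directly that the resulting triple $(x(t),w(t),u_\delta(t))$ furnishes a measure solution in the sense of the delta shock wave definition. First I would recall from Theorem~\ref{ThmFinal} that, in the hat variables, the vanishing viscosity limit produces a delta shock located at $x=\tfrac{\sigma}{\alpha k}(1-e^{-\alpha kt})$ carrying weight $\tfrac{w_0}{\alpha k}(1-e^{-\alpha kt})$, with $\widehat u\equiv\sigma$ on the curve, where $\sigma=\tfrac{1}{k+1}\sum_{j=0}^k u_-^{k-j}u_+^j$ and $w_0=-\sigma(v_--v_+)+(v_-u_-^k-v_+u_+^k)$. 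Setting $u=\widehat u e^{-\alpha t}$ multiplies the background states by $e^{-\alpha t}$ and turns the constant value $\sigma$ on the curve into $u_\delta(t)=\sigma e^{-\alpha t}$; since the transformation does not move $x$, the curve $x(t)=\tfrac{\sigma}{\alpha k}(1-e^{-\alpha kt})$ and the weight $w(t)=\tfrac{w_0}{\alpha k}(1-e^{-\alpha kt})$ are inherited unchanged. This gives the candidate \eqref{sol_final}.

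Next I would check that \eqref{sol_final} satisfies the weak formulation \eqref{weakSol1}. For the first equation: away from the curve, $u_\pm e^{-\alpha t}$ solves $u_t+(u^{k+1})_x=-\alpha u$ wherever $(v_\pm,u_\pm)$ solves \eqref{sys1} in hat form (a direct substitution, using that $u$ has no delta part so only the classical balance law is needed); along the curve there is no singular contribution in $u$, so integrating by parts against $\varphi$ and collecting the jump terms on $x=x(t)$ reduces to the Rankine--Hugoniot-type identity $x'(t)[u]=[u^{k+1}]e^{-\alpha t}$ at each time, which is exactly $\sigma e^{-\alpha t}[u]=\tfrac{1}{k+1}[u^{k+1}]e^{-\alpha t}(k+1)$ — wait, more carefully, $x'(t)=\sigma e^{-\alpha t}$ and the relation $\sigma[u]=\tfrac{1}{k+1}[u^{k+1}]$ from Lemma~\ref{lemmaA} closes it after extracting the common $e^{-\alpha t}$ and $e^{-\alpha(k+1)t}$ factors appropriately. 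For the second equation one computes $\langle v,\varphi_t\rangle+\langle vu^k,\varphi_x\rangle$ using the representation with $v=\widetilde v+w\delta_L$: the smooth part contributes boundary jump terms across $L$, and the singular part contributes $\int (w'(t)+\text{transport terms})\varphi\,dt$; the over-compatibility condition that makes everything cancel is precisely $\dfrac{dw}{dt}=-x'(t)[v]+[vu^k]e^{-\alpha t}$ and $x'(t)w(t)=(\text{consistency})$, which one verifies by differentiating the stated formulas for $w(t)$ and $x(t)$ and using the definitions of $w_0$ and $\sigma$. This is the computational heart of the argument and the step I expect to be the main obstacle: carefully tracking the $e^{-\alpha t}$-weights through the generalized Rankine--Hugoniot conditions for the delta shock and confirming that the time-dependent weight $w(t)$ (rather than a constant) is exactly what the two equations force.

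Finally I would address uniqueness and the entropy condition. For uniqueness: assuming a solution of the form \eqref{deltaSol1} with the background states forced to be the given constant Riemann data (since $u$ solves a genuinely nonlinear-free, linearly degenerate scalar balance law whose only admissible structure here is a single discontinuity), the generalized Rankine--Hugoniot system derived above is an ODE system for $(x(t),w(t))$ with $x(0)=0$, $w(0)=0$ and a prescribed right-hand side; its solution is unique, giving exactly the stated formulas. For the entropy condition: compute $dx(t)/dt=\sigma e^{-\alpha t}$ and invoke the inequality $u_+^k<\sigma<u_-^k$ established in the Case~1 analysis preceding Theorem~\ref{ThmFinal}; multiplying through by $e^{-\alpha t}>0$ yields $u_+^k e^{-\alpha t}<dx(t)/dt<u_-^k e^{-\alpha t}$ for all $t\ge 0$. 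I would note that this "overcompressive" condition $\lambda(u_+)<x'(t)<\lambda(u_-)$ with $\lambda=u^k e^{-\alpha t}$ is the natural admissibility criterion and is consistent with the fact that the delta shock arises as a genuine vanishing-viscosity limit, which is itself the strongest available justification.
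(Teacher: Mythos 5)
Your proposal takes essentially the same route as the paper's proof: transfer the delta shock from Theorem~\ref{ThmFinal} via $(v,u)=(\widehat v,\widehat u e^{-\alpha t})$ and then verify the weak formulation \eqref{weakSol1} directly by integration by parts, closing the jump terms with $\sigma[u]=\tfrac{1}{k+1}[u^{k+1}]$ and the generalized Rankine--Hugoniot relation $\tfrac{dw}{dt}=-x'(t)[v]+[v u^k]e^{-\alpha kt}$. Note only the small slip that $x'(t)=\sigma e^{-\alpha kt}$ (not $\sigma e^{-\alpha t}$), so the factors match as $e^{-\alpha(k+1)t}$ in the $u$-equation and the entropy inequality naturally reads $u_+^k e^{-\alpha kt}<x'(t)<u_-^k e^{-\alpha kt}$; this coincides with the stated form when $k=1$ and does not affect the structure of the argument.
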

\begin{proof}
We need show that \eqref{sol_final} is a solution to the problem \eqref{system_ld}--\eqref{datoRiemann} which can be found with $(v,u)=(\widehat{v},\widehat{u}e^{-\alpha t})$ and the result obtained in Theorem \ref{ThmFinal}. 
Therefore, for any test function $\varphi \in C_0^\infty (\re \times (0,\infty))$ we have
\begin{align*}
\langle u, \varphi_t \rangle + \langle u^{k+1} , \varphi_x \rangle =& \int_0^\infty \int_\re (u \varphi_t+u^{k+1} \varphi_x) dx dt \\
=&\int_0^\infty \int_{-\infty}^{x(t)} (u_-e^{-\alpha t} \varphi_t+u_-^{k+1}e^{-\alpha (k+1)t} \varphi_x) dx dt \\
&+ \int_0^\infty \int_{x(t)}^{\infty} (u_+e^{-\alpha t} \varphi_t+u_+^{k+1}e^{-\alpha (k+1)t} \varphi_x) dx dt\\
=&-\oint - \left( u_-^{k+1}e^{-\alpha (k+1)t} \varphi \right) dt + \left( u_-e^{-\alpha t} \varphi \right) dx\\
&+ \oint - \left( u_+^{k+1}e^{-\alpha (k+1)t} \varphi \right) dt + \left( u_+e^{-\alpha t} \varphi \right) dx
+ \int_0^\infty \int_{\re} \alpha u \varphi dx dt\\
=&\int_0^\infty \left( (u_-^{k+1}-u_+^{k+1})e^{-\alpha kt} -\frac{dx(t)}{dt} (u_--u_+)  \right) e^{-\alpha t} \varphi dt
+ \int_0^\infty \int_{\re} \alpha u \varphi dx dt\\
=& \int_0^\infty \int_\mathbb{R} \alpha u \varphi dx dt
\end{align*}
which implies the second equation of \eqref{weakSol1}. A completely similar argument leads to the first equation of \eqref{weakSol1}.
\begin{align*}
\langle v, \varphi_t \rangle + \langle vu^{k} , \varphi_x \rangle =& \int_0^\infty \int_\re (v \varphi_t+vu^{k} \varphi_x) dx dt 
+\int_0^\infty w(\varphi_t + u_\delta^k \varphi_x) dt \\
=&\int_0^\infty \int_{-\infty}^{x(t)} (v_- \varphi_t+v_-u_-^{k}e^{-\alpha kt} \varphi_x) dx dt \\
&+ \int_0^\infty \int_{x(t)}^{\infty} (v_+ \varphi_t+v_+u_+^{k}e^{-\alpha kt} \varphi_x) dx dt 
+\int_0^\infty w(\varphi_t + u_\delta^k \varphi_x) dt \\
=&-\oint - \left( v_-u_-^{k}e^{-\alpha kt} \varphi \right) dt + \left( v_- \varphi \right) dx
+ \oint - \left( v_+u_+^{k}e^{-\alpha kt} \varphi \right) dt + \left( v_+ \varphi \right) dx\\
&+ \int_0^\infty w \frac{d\varphi}{dt} dt\\
=&\int_0^\infty \left( (v_-u_-^{k}-v_+u_+^{k})e^{-\alpha kt} -(v_--v_+)\frac{dx(t)}{dt}-\frac{dw(t)}{dt} \right) \varphi dt=0
\end{align*}
\end{proof}

\section{Final remarks} %Final  comments
From Theorem \ref{ThmCRS}, we can observe that when $\alpha \to 0+$, the solution converges to
$$
(v(x,t),u(x,t))=
\begin{cases}
(v_-,u_-), &\mbox{if } x <u_-^k t,\\
(0, (x/t)^{1/k}), &\mbox{if } u_-^k t \le x \le u_+^k t,\\
(v_+,u_+), &\mbox{if } x >u_+^k t,
\end{cases}
$$
which is the classical Riemann solution for the homogeneous system associated to \eqref{system_ld}.
In similar way, from Theorem \ref{Thm4.1}, we can observe that when $\alpha \to 0+$, the solution converges to
$$
(v(x,t),u(x,t))=
\begin{cases}
(v_-,u_-), &\mbox{if } x <\sigma t,\\
(w_0 t \delta(x-\sigma t), \sigma), &\mbox{if } x = \sigma t,\\
(v_+,u_+), &\mbox{if } x >\sigma t,
\end{cases}
$$
where $\sigma = \frac{1}{k+1}\sum\limits_{j=0}^{k} u_-^{k-j}u_+^j$ and $w_0=-\sigma(v_--v_+)+(v_-u_-^k-v_+u_+^k)$. This solution is a delta shock wave solution for the homogeneous system associated to \eqref{system_ld}. 
The Riemann problem for the homogeneous system associated to \eqref{system_ld} with $k=1$
was solved by K.T. Joseph (see main theorem in \cite{Joseph}).\\

%\begin{align*}
%w_0&=-\frac{1}{2}(u_-+u_+)(v_--v_+)+(v_-u_--v_+u_+)\\
%&=-\frac{1}{2}u_-v_-+\frac{1}{2}u_-v_+-\frac{1}{2}u_+v_-+\frac{1}{2}u_+v_++v_-u_--v_+u_+\\
%&=\frac{1}{2}u_-v_-+\frac{1}{2}u_-v_+-\frac{1}{2}u_+v_--\frac{1}{2}u_+v_+\\
%&=\frac{1}{2}(u_-v_-+u_-v_+-u_+v_--u_+v_+)=\frac{1}{2}(u_-(v_-+v_+)-u_+(v_-+v_+))
%\end{align*}
%
%\begin{align*}
%w_0
%&=\left( \frac{k}{k+1}v_-u_-^k+\frac{1}{k+1} \sum_{j=1}^{k-1}u_-^{k-j}u_+^jv_+ \right) - 
%\left( \frac{k}{k+1}v_+u_+^k + \frac{1}{k+1} \sum_{j=1}^{k-1}u_-^{k-j}u_+^jv_- \right)
%\end{align*}

{\bf Acknowledgments}\\
The first author wishes to thank Professor Kayyunnapara Thomas Joseph who kindly sent him the paper \cite{Joseph}. % for sending his paper \cite{Joseph}. % 


\begin{thebibliography}{99}
\bibitem{Barenblatt} G.I. Barenblatt: {\em Scaling, self-similarity, and intermediate asymptotics} (Cambridge University Press, 1996). 
%\bibitem{Bouchut} F. Bouchut: On zero pressure gas dynamics. {\em Advances in kinetic theory and computing}, Series on Advances in Mathematics for Applied Sciences, Vol. 22, Singapore: World Scientific, 1994, pp. 171--190.
%\bibitem{BG} Y. Brenier and E. Grenier: Sticky particles and scalar conservation laws, {\em SIAM J. Num. Anal.} 35 (6), 2317--2328 (1998).
%\bibitem{Burgers}  J.M. Burgers: A mathematical model illustrating the theory of turbulence, {\em Adv. Appl. Mech.} {\bf 1}  171--179 (1948).
%\bibitem{Cole}  J.D.  Cole:  On  a  quasilinear  parabolic  equation  occurring  in  aerodynamics,  {\em Quart.  Appl.  Math.} {\bf 9} 225--236  (1951).
\bibitem{Crighton} D.G. Crighton: Model equations of nonlinear acoustics, {\em Annu. Rev. Fluid Mech.} {\bf 11} (1979) 11--33.
\bibitem{CS} D.G. Crighton and J.F. Scott: Asymptotic solution of model equations in nonlinear acoustics, {\em Phil. Trans. Koy Sot.} A292 (1979) 101--134. 
\bibitem{Dafermos} C.M. Dafermos: Solutions of the Riemann problem for a class of hyperbolic systems of conservation laws by the viscosity method, {\em Arch. Ration. Mech. Anal.} {\bf 52} (1973) 1--9.
%\bibitem{Danilov} V.G. Danilov and D. Mitrovic: Delta shock wave formation in the case of triangular hyperbolic system of conservation laws, {\em J. Differential Equations} 245, 3704--3734 (2008).
\bibitem{Delacruz} R. De la cruz: Riemann Problem for a $2 \times 2$ hyperbolic system with linear damping, {\em Acta Applicandae Mathematicae}, (2020).\\
DOI: \url{https://doi.org/10.1007/s10440-020-00350-w}
\bibitem{DS} R. De la cruz and M. Santos: Delta shock waves for a system of Keyfitz-Kranzer type, {\em Z. Angew. Math. Mech.} {\bf 99} (2019) e201700251.
\bibitem{DE} J. Doyle and M.J. Englefield: Similarity solutions of a generalized Burgers equation, {\em IMA  Journal of Applied  Mathematics} {\bf 44} (1990) 145--153.
%\bibitem{ERS} W. E, Y. Rykov and Y. Sinai: Generalized variational principles, global weak solutions and behavior with random initial data for systems of conservation laws arising in adhesion particle dynamics, {\em Commun. Math. Phys.} 177, 349--380 (1996).
\bibitem{Ercole} G. Ercole: Delta-shock waves as self-similar viscosity limits, {\em Quart. Appl. Math.} LVIII(1) (2000) 177--199.
\bibitem{Henriksen} R. N. Henriksen: {\em  	Scale Invariance: Self-Similarity of the Physical World} (Wiley-VCH, 2015).
%\bibitem{Hopf} E. Hopf: The partial differential equation $u_t+uu_x= \mu u_{xx}$, {\em Comm. Pure Appl. Math.} {\bf 3} 201--230 (1950).
%\bibitem{Huang} F. Huang, Existence and uniqueness of discontinuous solutions for a class nonstrictly hyperbolic systems, areas, in: Advances in Nonlinear Partial Differential Equations and Related Areas, Beijing, pp. 187--208, 1998.
%\bibitem{HW} F. Huang and Z. Wang: Well posedness for pressureless flow, {\em Commun. Math. Phys.} 222, 117--146 (2001).
\bibitem{Isaacson} E.L. Isaacson and B. Temple: Analysis of a singular hyperbolic system of conservation laws, {\em J. Differential Equations} {\bf 65} (1986) 250--268.
\bibitem{Joseph} K.T. Joseph: A Riemann problem whose viscosity solution contain $\delta$-measures, {\em Asymptot. Anal.} {\bf 7} (1993) 105--120.
%\bibitem{KB} S. Keita and Y. Bourgault: Eulerian droplet model: Delta-shock waves and solution of the Riemann problem, {\em J. Math.Anal.Appl.} 472 (1), 1001--1027 (2019).
%\bibitem{Korchinski} D.J. Korchinski, Solution of a Riemann problem for a $2 \times 2$ system of conservation laws possessing no classical weak solution, PhD thesis, Adelphi University, 1977.
\bibitem{LeFloch} P.G. LeFloch: Entropy weak solutions to nonlinear hyperbolic systems under nonconservative form, {\em Communications in Partial Differential Equation}, {\bf 13}(6) (1988) 669-727.
%\bibitem{LeFloch} P.G. LeFloch: Shock waves for nonlinear hyperbolic systems in nonconservative form, Institute Math. Appl., Minneapolis, Preprint \# 593, 1989.
%\bibitem{LeFloch} P. Le Floch: An existence and uniqueness result for two nonstrictly hyperbolic systems, in: Nonlinear Evolution Equations that Change Type. The IMA Volumes in Mathematics and Its Applications, vol. 27, Springer-Verlag, Berlin/New York, 1990.
%\bibitem{LeVeque} R. J. Leveque: The dynamics of pressureless dust clouds and delta waves, {\em J. Hyperbolic Differ. Equ.} 1 (2), 315--327 (2004).
%\bibitem{LY} J. Li and H. Yang: Delta-shocks as limit of solutions of multidimensional zero-pressure gas dynamics, {\em Quart. Appl. Math.} 59, 315--342 (2001).
\bibitem{Luo} Y. Q. Lou and W. G. Wang: New self-similar solutions of polytropic gas dynamics, {\em Mon. Not. Roy. Astron. Soc.} {\bf 372}(2) (2006) 885-900.
%\bibitem{MMZ} A. Majda, G. Majda and Y. Zheng: Concentrations in the one-dimensional Vlasov-Poissonequations I: Temporal development and non-uniqueweak solutions in the single component case,{\em Physica D} 74, 268--300 (1994).
\bibitem{MVSK} B Mayil Vaganana and M Senthil Kumaran:  Kummer function solutions of damped Burgers equations with time-dependent viscosity by exact linearization, {\em Nonlinear Anal.: Real World Appl.}  {\bf 9} (2008) 2222--2233.
\bibitem{Polyanin} A.D. Polyanin and V.F. Zaitsev: {\em Handbook of nonlinear partial differential equations} (Chapman \& Hall/CRC, 2003).
%\bibitem{Sarrico} C.O.R. Sarrico: New solutions for the one-dimensional nonconservative inviscid Burgers equation, {\em J. Math. Anal. Appl.} {\bf 317} 496--509 (2006).
\bibitem{Sachdev} P.L. Sachdev: {\em Self-Similarity and Beyond: Exact Solutions of Nonlinear Problems} (Chapman \& Hall/CRC, Monographs and Surveys in Pure and Applied Mathematics 113, 2000).
\bibitem{SS} Ch. Sackand H. Schamel, Nonlinear dynamics in expanding plasmas. {\em Phys. Lett.}, ll0A (1985) 206-212.
\bibitem{SZ} S.F. Shandarin and Y.B. Zeldovich: Large-scale structure of the universe: Turbulence, intermittency, structures in a self-gravitating medium, {\em Rev. Modern Phys.} {\bf 61} (1989) 185--220.
%\bibitem{Shen} C. Shen, The Riemann problem for the pressureless Euler system with the Coulomb-like friction term, {\em IMA J. Appl. Math.} 81, 76--99  (2016).
\bibitem{Scott} J.F. Scott: The long time asymptotics of solution to the generalized Burgers equation, {\em Proc. Roy. Sot. Land.} A {\bf 373} (1981) 443--456. 
\bibitem{Suto} Y. Suto and J. Silk: Self-similar dynamics of polytropic gaseous spheres, {\em Astrophys. J.} {\bf 326} (1988) 527--538.
\bibitem{TZZ} D. Tan, T. Zhang and Y. Zheng: Delta shock waves as limits of vanishing viscosity for hyperbolic systems of conversation laws, {\em J. Differential Equations} {\bf 112} (1994) 1--32.
\bibitem{Tupciev} V.A. Tupciev: On the method of introducing viscosity in the study of problems involving decay of a discontinuity, {\em Dokl.
Akad. Nauk SSR} {\bf 211} (1973) 55--58. %; transl. in: {\em Soviet Math. Dokl.} 14, 978--982 (1973).
\bibitem{Volpert} A.I. Volpert: The space BV and quasilinear equations, {\em Math. Sbornik.} {\bf 73}(115) (1967) 225--267.
\bibitem{WZ2} J.H. Wang and H. Zhang: A new viscous regularization of the Riemann problem for Burger's equation, {\em J. Partial Dig. Eqs.} {\bf 13} (2000) 253--263. 
\bibitem{WZ} J. Wang and H. Zhang: Existence and decay rates of solutions to the generalized Burgers equation, {\em J. Math. Anal. Appl.} {\bf 284} (2003) 213--235.
\bibitem{HYang} H. Yang: Riemann problems for a class of coupled hyperbolic systems of conservation laws, {\em J. Differential Equations} {\bf 159} (1999) 447--484.
\bibitem{YZ} H. Yang and Y. Zhang: New developments of delta shock waves and its applications in systems of conservation laws, {\em J. Differential Equations} {\bf 252} (2012) 5951--5993.
\bibitem{HZhang} H. Zhang: Global existence and asymptotic behaviour of the solution of a generalized Burger's equation with viscosity, {\em Computers Math. Applic.} {\bf 41}(5-6) (2001) 589--596. 
\bibitem{ZW} H. Zhang and X. Wang: Large-Time Behavior of Smooth Solutions to a Nonuniformly Parabolic Equation, {\em Computers Math. Applic.} {\bf 47}(2-3) (2004) 353--363. 
\end{thebibliography}
\end{document}